\numberwithin{equation}{section}
\newcommand{\Z}{\mathbb{Z}}
\newcommand{\N}{\mathbb{N}}
\newcommand{\R}{\mathbb{R}}
\newcommand{\F}{\mathcal{F}}
\newcommand{\Sh}{\mathcal{S}}
\newcommand{\supp}{\mathop{\mathrm{supp}}}
\newcommand{\pa}{\partial}
\newcommand{\la}{\langle}
\newcommand{\ra}{\rangle}
\newcommand{\K}{\boldsymbol{k}}
\newcommand{\Nu}{\boldsymbol{\nu}}
\theoremstyle{plain}
\newtheorem{thm}{Theorem}[section]
\newtheorem{prop}[thm]{Proposition}
\newtheorem{lem}[thm]{Lemma}
\theoremstyle{definition}
\begin{document}
\title[Bilinear pseudo-differential operators of $S_{0,0}$-type]
{On the ranges of bilinear pseudo-differential operators
of $S_{0,0}$-type on $L^2 \times L^2$}

\author[N. Hamada]{Naoki Hamada}
\author[N. Shida]{Naoto Shida}
\author[N. Tomita]{Naohito Tomita}


\address{Department of Mathematics, 
Graduate School of Science, Osaka University, 
Toyonaka, Osaka 560-0043, Japan}

\email[N. Hamada]{u171235f@ecs.osaka-u.ac.jp}
\email[N. Shida]{u331453f@ecs.osaka-u.ac.jp}
\email[N. Tomita]{tomita@math.sci.osaka-u.ac.jp}

\keywords{Besov spaces, Bilinear H\"ormander symbol classes,
Bilinear pseudo-differential operators}

\subjclass[2020]{35S05, 42B15, 42B35}

\begin{abstract}
In this paper,
the ranges of bilinear pseudo-differential operators
of $S_{0,0}$-type on $L^2 \times L^2$
are determined in the framework of Besov spaces.
Our result improves
the $L^2 \times L^2 \to L^1$ boundedness
of those operators with symbols
in the bilinear H\"ormander class
$BS^m_{0,0}$.
\end{abstract}

\maketitle

\section{Introduction}
In this paper,
we use the following two symbol classes of $S_{0,0}$-type.
One is the bilinear H\"ormander class $BS^{m}_{0,0}$, $m \in \R$,
consisting of all $\sigma(x,\xi_1,\xi_2) \in C^{\infty}((\R^n)^3)$ such that
\[
|\partial^{\alpha}_{x}\partial^{\beta_1}_{\xi_1}\partial^{\beta_2}_{\xi_2}
\sigma(x,\xi_1,\xi_2)|
\le C_{\alpha,\beta_1,\beta_2}(1+|\xi_1|+|\xi_2|)^m
\]
for all multi-indices $\alpha,\beta_1,\beta_2 \in \N_0^n=\{0,1,2,\dots \}^n$.
The other is $BS^{(m_1,m_2)}_{0,0}$, $m_1,m_2 \in \R$,
consisting of all $\sigma(x,\xi_1,\xi_2) \in C^{\infty}((\R^n)^3)$ such that
\[
|\partial^{\alpha}_{x}\partial^{\beta_1}_{\xi_1}\partial^{\beta_2}_{\xi_2}
\sigma(x,\xi_1,\xi_2)|
\le C_{\alpha,\beta_1,\beta_2}(1+|\xi_1|)^{m_1}(1+|\xi_2|)^{m_2} .
\]
For a symbol $\sigma$,
the bilinear pseudo-differential operators $T_{\sigma}$ is defined by
\[
T_{\sigma}(f_1,f_2)(x)
=\frac{1}{(2\pi)^{2n}}
\int_{(\R^n)^2}e^{ix\cdot(\xi_1+\xi_2)}
\sigma(x,\xi_1,\xi_2)
\widehat{f_1}(\xi_1)\widehat{f_2}(\xi_2)\, d\xi_1d\xi_2
\]
for $f_1,f_2 \in \Sh(\R^n)$.

In the linear case,
the celebrated Calder\'on-Vaillancourt theorem \cite{CV} states that
if a symbol $\sigma(x,\xi) \in C^{\infty}((\R^n)^2)$ satisfies
\begin{equation}\label{linear}
|\partial^{\alpha}_{x}\partial^{\beta}_{\xi}\sigma(x,\xi)|
\le C_{\alpha,\beta},
\end{equation}
then the corresponding pseudo-differential operator
is bounded on $L^2$.
As a bilinear counterpart of this theorem,
we naturally expect that
the condition $\sigma \in BS^0_{0,0}$
ensures the $L^2 \times L^2 \to L^1$ boundedness
of $T_{\sigma}$.
However, B\'enyi-Torres \cite{BT} pointed out
that this boundedness does not hold in general.
Michalowski-Rule-Staubach \cite{MRS}
treated the subcritical case $m<-n/2$,
and then Miyachi-Tomita \cite{MT-IUMJ} showed that
all bilinear pseudo-differential operators
with symbols in $BS^m_{0,0}$ are bounded from
$L^2 \times L^2$ to $L^1$ if and only if $m \le -n/2$.
It should be mentioned that
the stronger $L^2 \times L^2 \to h^1$ boundedness
was proved in \cite{MT-IUMJ}, where $h^1$ is the local Hardy space.
See also Miyachi-Tomita \cite{MT}
for the classes of $S_{\rho,\rho}$-type,
$0 \le \rho<1$.

Kato-Miyachi-Tomita recently proved that
if $m_1,m_2<0$,
$m_1+m_2=-n/2$ and $\sigma \in BS^{(m_1,m_2)}_{0,0}$,
then $T_{\sigma}$ is bounded from $L^2 \times L^2$ to
the amalgam space $(L^2,\ell^1)$
(\cite[Theorem 1.3, Example 1.4]{KMT}).
Here, we remark that
\begin{equation}\label{two-classes}
BS^{-n/2}_{0,0} \subset BS^{(m_1,m_2)}_{0,0},
\quad m_1,m_2 \le 0, \ m_1+m_2=-n/2.
\end{equation}
Hence, for $1 \le p \le 2$,
the $L^2 \times L^2 \to L^p$ boundedness
of $T_{\sigma}$ with $\sigma \in BS^{-n/2}_{0,0}$ follows
from the embedding $(L^2,\ell^1) \hookrightarrow L^p$
(see Section \ref{section2}).
In the linear case,
since the constant function satisfies \eqref{linear},
the $L^2 \to L^p$ boundedness of pseudo-differential operators
with symbols satisfying \eqref{linear} holds
if and only if $p=2$.
On the other hand,
in the bilinear case,
we have a choice to choose target spaces.
Thus, the purpose of this paper
is to determine the ranges of bilinear pseudo-differential operators
with symbols in the critical class $BS^{-n/2}_{0,0}$ on $L^2 \times L^2$
in the framework of Besov spaces $B_{p,q}^s$.

The main result of this paper is the following.

\begin{thm} \label{main1}
Let $m_1, m_2 < 0$ and $m_1+m_2=-n/2$.
If $1 \le p \le 2$, then all bilinear pseudo-differential operators
with symbols in $BS^{(m_1,m_2)}_{0,0}$ are bounded from $L^2(\R^n) \times L^2(\R^n)$
to $B_{p,1}^0(\R^n)$.
\end{thm}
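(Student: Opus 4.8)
The strategy is to reduce $\|T_\sigma(f_1,f_2)\|_{B_{p,1}^0}\sim\sum_{j\ge0}\|\psi_j(D)T_\sigma(f_1,f_2)\|_{L^p}$ (for a fixed Littlewood--Paley partition of unity $\{\psi_j\}_{j\ge0}$) to a sum over building blocks, by inserting three Littlewood--Paley decompositions at once: $f_i=\sum_{k_i\ge0}\psi_{k_i}(D)f_i$ ($i=1,2$), and $\sigma=\sum_{l\ge0}\sigma_l$ where $\sigma_l=\phi_l(D_x)\sigma$ is a dyadic decomposition of $\sigma$ in the $x$-frequency (so the $x$-Fourier transform of $\sigma_l$ is supported in $|\eta|\sim2^l$ for $l\ge1$, and in $|\eta|\lesssim1$ for $l=0$). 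Two facts organize the resulting sum over the building blocks $\psi_j(D)\,T_{\sigma_l}\bigl(\psi_{k_1}(D)f_1,\psi_{k_2}(D)f_2\bigr)$: (i) since every $x$-derivative of $\sigma$ is bounded by $(1+|\xi_1|)^{m_1}(1+|\xi_2|)^{m_2}$, each $\sigma_l$ again lies in $BS^{(m_1,m_2)}_{0,0}$ with seminorms $\le C_N2^{-lN}\|\sigma\|$ for every $N$; (ii) the output frequency of such a block lies in $\{\eta+\xi_1+\xi_2:|\eta|\sim2^l,\ |\xi_i|\sim2^{k_i}\}$, so it vanishes unless $j\le\max(l,k_1,k_2)+C$. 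One may first reduce to the endpoints $p=1$ and $p=2$ by complex interpolation of Besov spaces, $[B^0_{1,1},B^0_{2,1}]_\theta=B^0_{p,1}$, together with bilinear interpolation in the target (with one argument frozen); but the decomposition is meant to run for all $p\in[1,2]$ simultaneously.

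Several configurations are immediate. When the $x$-frequency of $\sigma$ dominates, i.e. $l\ge\max(k_1,k_2)-C$ (which forces $j\le l+C$), the block is controlled by $\|\psi_j(D)T_{\sigma_l}(\psi_{k_1}(D)f_1,\psi_{k_2}(D)f_2)\|_{L^p}\lesssim\|T_{\sigma_l}(\psi_{k_1}(D)f_1,\psi_{k_2}(D)f_2)\|_{(L^2,\ell^1)}\le C_N2^{-lN}\|\psi_{k_1}(D)f_1\|_{L^2}\|\psi_{k_2}(D)f_2\|_{L^2}$, using the boundedness $BS^{(m_1,m_2)}_{0,0}\colon L^2\times L^2\to(L^2,\ell^1)$ of Kato--Miyachi--Tomita \cite{KMT} and the embedding $(L^2,\ell^1)\hookrightarrow L^p$; summing the $O(l^3)$ admissible $(j,k_1,k_2)$ in this range, and then over $l$, converges because of the factor $2^{-lN}$. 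The same estimate handles the case where the two input frequencies are far apart and the output frequency is much smaller than the larger of them, since then $\sigma$ must supply a large $x$-frequency, again forcing $l$ to be large. Thus it remains to treat the blocks with $\max(k_1,k_2)>l+C$.

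These split into the ``balanced'' case, where the output frequency is comparable to the largest input frequency ($j\sim\max(k_1,k_2)$, which includes the ``low--high'' interactions $k_1\ll k_2$), and the ``high--high--to--low'' case ($k_1\sim k_2=:k$, $j\ll k$). In the balanced case the criticality $m_1+m_2=-n/2$ closes the sum: on the block the symbol carries the amplitude factor $2^{k_1m_1+k_2m_2}$, and one establishes, by Calder\'on--Vaillancourt-type kernel estimates, a building-block bound of the form
\[
\bigl\|\psi_j(D)\,T_{\sigma_l}\bigl(\psi_{k_1}(D)f_1,\psi_{k_2}(D)f_2\bigr)\bigr\|_{L^p}\lesssim 2^{-lN}\,2^{-\varepsilon(|j-\max(k_1,k_2)|+|k_1-k_2|)}\,\|\psi_{k_1}(D)f_1\|_{L^2}\,\|\psi_{k_2}(D)f_2\|_{L^2}
\]
for some $\varepsilon>0$ and all $N$; after the $\varepsilon$-decay absorbs the summations in $l$, in $j$ near $\max(k_1,k_2)$, and in $|k_1-k_2|$, what remains is the diagonal sum $\sum_k\|\psi_k(D)f_1\|_{L^2}\|\psi_k(D)f_2\|_{L^2}\le\bigl(\sum_k\|\psi_k(D)f_1\|_{L^2}^2\bigr)^{1/2}\bigl(\sum_k\|\psi_k(D)f_2\|_{L^2}^2\bigr)^{1/2}\lesssim\|f_1\|_{L^2}\|f_2\|_{L^2}$, and it is precisely $m_1+m_2=-n/2$ that makes this sum $\ell^1$ in $k$ rather than losing a power of $2^k$.

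The high--high--to--low case is the main obstacle. Here the symbol carries the amplitude $2^{k(m_1+m_2)}=2^{-kn/2}$, exactly the $BS^{-n/2}_{0,0}$ normalization at which the $L^2\times L^2\to L^1$ bound is sharp (cf. Miyachi--Tomita \cite{MT-IUMJ}); in particular $L^1$ cannot be dominated by $L^2$, so summing by the triangle inequality the blocks with different common input scale $k$ that land in a fixed output annulus loses, and one must instead exploit the cancellation among these blocks --- the very mechanism producing the $L^2\times L^2\to h^1$ estimate of \cite{MT-IUMJ} and the $L^2\times L^2\to(L^2,\ell^1)$ estimate of \cite{KMT}. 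The plan is to reprove such a cancellation estimate with the Littlewood--Paley truncation $\psi_j(D)$ and the extra factor $2^{-lN}$ in place, directly in the $L^p$-target, $1\le p\le2$. Collecting the contributions of all three regimes --- and interpolating for $1<p<2$ if the endpoint reduction was used --- yields $T_\sigma\colon L^2(\R^n)\times L^2(\R^n)\to B_{p,1}^0(\R^n)$ with operator norm bounded by finitely many $BS^{(m_1,m_2)}_{0,0}$-seminorms of $\sigma$.
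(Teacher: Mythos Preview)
Your case decomposition matches the paper's, and the large-$l$ regime (handled via the Kato--Miyachi--Tomita amalgam bound and the decay $2^{-lN}$) is fine. The gaps are the other two cases, which are where the actual work lies. In the ``balanced'' case the asserted block estimate with $2^{-\varepsilon|k_1-k_2|}$ decay is not a routine ``Calder\'on--Vaillancourt-type kernel estimate'': after extracting the amplitude $2^{k_1m_1+k_2m_2}$ you still need an $L^2\times L^2\to L^p$ bound of order $2^{\min(k_1,k_2)\,n/2}$ for a $BS^0_{0,0}$-symbol localized where $|\xi_{\min}|\sim 2^{\min(k_1,k_2)}$, so that $2^{k_1m_1+k_2m_2}\cdot 2^{k_2n/2}=2^{m_1(k_1-k_2)}$ produces the decay. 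That $|\Lambda|^{1/2}$-type gain is an $\ell^2$-orthogonality statement over a \emph{unit-cube} partition of the frequency support, not a kernel bound. In the high--high--to--low case you offer only a plan, and the diagnosis is slightly off: no cancellation \emph{between} different input scales $k$ is used or needed. What is needed is the single-block bound
\[
\|\psi_j(D)\,T_{\sigma_l}(\psi_k(D)f_1,\psi_k(D)f_2)\|_{L^p}\lesssim 2^{-lN}\,2^{(j-k)n/2}\,\|\psi_k(D)f_1\|_{L^2}\|\psi_k(D)f_2\|_{L^2},
\]
whose crucial gain $2^{jn/2}$ (rather than $2^{kn/2}$) comes from the constraint $|\xi_1+\xi_2|\sim 2^j$: the admissible sums $\nu_1+\nu_2$ lie in a set of size $O(2^{jn})$. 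Summing over $j\le k$ then lands exactly at criticality, after which Cauchy--Schwarz in $k$ closes.

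The paper supplies precisely this missing ingredient, but via duality: it pairs against $g\in B^0_{p',\infty}$, refines the Littlewood--Paley decomposition of $(\xi_1,\xi_2)$ by a partition into unit cubes $\varphi(\xi_i-\nu_i)$, $\Nu=(\nu_1,\nu_2)\in(\Z^n)^2$, and proves one unified trilinear estimate (Lemma~\ref{lem-L2L2Lr-est}): for any finite $\Lambda\subset\Z^n$, the sum of $|\la T_{\sigma_{j,\K,\Nu}}(f_1,f_2),g\ra|$ over $\nu_1\in\Lambda$, or over $\nu_2\in\Lambda$, or over $\nu_1+\nu_2\in\Lambda$, is bounded by $2^{k_1m_1+k_2m_2-jN}|\Lambda|^{1/2}\|f_1\|_{L^2}\|f_2\|_{L^2}\|g\|_{L^{p'}}$. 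Your balanced case is then $\Lambda=\Lambda_{k_2}$ with $|\Lambda|\lesssim 2^{k_2n}$; your high--high--to--low case is $\Lambda=\Lambda_{j,\ell}$ with $|\Lambda|\lesssim 2^{(j+\ell)n}$. Without this lemma --- or an equivalent unit-cube orthogonality argument --- your outline does not close.
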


Since $B_{1, 1}^0 \hookrightarrow L^1$
(in fact, $B_{1,1}^0 \hookrightarrow h^1 \hookrightarrow L^1$),
Theorem \ref{main1} is an improvement of the $L^2 \times L^2 \to L^1$ boundedness.

By \eqref{two-classes},
the following means the optimality of Theorem \ref{main1}.

\begin{thm} \label{main2}
Let $0 < p, q \leq \infty$.
Then all bilinear pseudo-differential operators with symbols in $BS^{-n/2}_{0,0}$
are bounded from $L^2(\R^n) \times L^2(\R^n)$ to $B^0_{p, q}(\R^n)$
if and only if $1 \leq p \leq 2$ and $1 \leq q \leq \infty$.
\end{thm}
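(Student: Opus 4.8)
The sufficiency is immediate from Theorem~\ref{main1}: taking $m_1=m_2=-n/4$ in \eqref{two-classes} gives $BS^{-n/2}_{0,0}\subset BS^{(-n/4,-n/4)}_{0,0}$, so every $T_\sigma$ with $\sigma\in BS^{-n/2}_{0,0}$ is bounded from $L^2\times L^2$ to $B^0_{p,1}$ for $1\le p\le2$, and composing with the embedding $B^0_{p,1}\hookrightarrow B^0_{p,q}$ (valid for all $1\le q\le\infty$) gives boundedness into $B^0_{p,q}$. For the necessity, suppose $T_\sigma\colon L^2\times L^2\to B^0_{p,q}$ is bounded for every $\sigma\in BS^{-n/2}_{0,0}$; by the closed graph theorem there exist $C>0$ and a single continuous seminorm of the Fr\'echet space $BS^{-n/2}_{0,0}$ dominating all the operator norms. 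Thus to exclude a pair $(p,q)$ it suffices to produce symbols with uniformly bounded $BS^{-n/2}_{0,0}$-seminorms and inputs $f_1,f_2\in\Sh(\R^n)$ making $\|T_\sigma(f_1,f_2)\|_{B^0_{p,q}}/(\|f_1\|_{L^2}\|f_2\|_{L^2})$ arbitrarily large; one family will handle $p\ge1$ (here even a single fixed symbol works) and a second, multi-scale, family will handle $p\le2$ and $q\ge1$ simultaneously.

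For $p\ge 1$: let $\chi\in C^\infty_c(\R^n)$ with $\chi\equiv1$ near the origin and put $\sigma(x,\xi_1,\xi_2)=\chi(\xi_1)\chi(\xi_2)$. This lies in $BS^{-n/2}_{0,0}$ because it has compact support in $(\xi_1,\xi_2)$, on which $1+|\xi_1|+|\xi_2|$ is bounded, and $T_\sigma(f,f)=(\chi(D)f)^2$. Fixing $\psi\in\Sh(\R^n)$ with $\widehat\psi$ supported where $\chi\equiv1$ and setting $f=f_N=\sum_{k=1}^N\psi(\,\cdot\,-v_k)$ with $v_1,\dots,v_N$ widely separated, we get $\chi(D)f=f$, $\|f\|_{L^2}\approx N^{1/2}$, and $T_\sigma(f,f)=f^2$, whose Fourier transform is supported in a fixed ball; hence $\|T_\sigma(f,f)\|_{B^0_{p,q}}\approx\|f^2\|_{L^p}=\|f\|_{L^{2p}}^2\approx N^{1/p}$. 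Boundedness would force $N^{1/p}\lesssim N$ for all $N$, i.e.\ $p\ge1$.

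For $p\le 2$ and $q\ge1$: fix $0\le\Phi\in C^\infty_c(\R^n)$, a unit vector $e$, positive constants $a_1,a_2,c_1,c_2$, and a set $J\subset\N$ with $\#J=M$ so lacunary that the dyadic shells $\{|\zeta|\sim2^j\}$ ($j\in J$) are pairwise disjoint. For $j\in J$, $i=1,2$, let $\widehat{f_i^{(j)}}(\xi)=2^{-nj/2}\Phi\big(c_i2^{-j}(\xi-a_i2^je)\big)$; choosing $a_i,c_i$ appropriately, $|\xi_1|+|\xi_2|\sim2^j$ and $\xi_1+\xi_2$ stays in the annulus $\{|\zeta|\sim2^j\}$ (bounded away from $0$) whenever $\xi_1\in\supp\widehat{f_1^{(j)}}$, $\xi_2\in\supp\widehat{f_2^{(j)}}$, and $\|f_i^{(j)}\|_{L^2}\approx1$. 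Let $\sigma_j(x,\xi_1,\xi_2)=2^{-nj/2}\chi_j(\xi_1,\xi_2)$ with $\chi_j\in C^\infty_c$ equal to $1$ on $\supp\widehat{f_1^{(j)}}\times\supp\widehat{f_2^{(j)}}$ and supported in $\{|\xi_1|+|\xi_2|\sim2^j\}$, and set $\sigma=\sum_{j\in J}\sigma_j$; since the $\sigma_j$ have pairwise disjoint supports on which $1+|\xi_1|+|\xi_2|\sim2^j$, the factor $2^{-nj/2}$ is exactly what keeps all $BS^{-n/2}_{0,0}$-seminorms of $\sigma$ bounded independently of $M$. With $f_i=\sum_{j\in J}f_i^{(j)}$ (Fourier-disjoint summands, so $\|f_i\|_{L^2}\approx M^{1/2}$), the frequency separation gives $T_\sigma(f_1,f_2)=\sum_{j\in J}T_{\sigma_j}(f_1^{(j)},f_2^{(j)})=\sum_{j\in J}2^{-nj/2}f_1^{(j)}f_2^{(j)}=:\sum_{j\in J}g_j$, and a direct computation identifies $g_j$, up to a modulation, with a bump of height $\sim2^{nj/2}$ on a ball of radius $\sim2^{-j}$ whose Fourier transform lies in $\{|\zeta|\sim2^j\}$; therefore $\|g_j\|_{L^p}\approx2^{nj(1/2-1/p)}$ and, the $g_j$ sitting at pairwise disjoint dyadic scales,
\[
\|T_\sigma(f_1,f_2)\|_{B^0_{p,q}}\approx\Big(\sum_{j\in J}2^{njq(1/2-1/p)}\Big)^{1/q}.
\]
If $p>2$, this exceeds $2^{n(\max J)(1/2-1/p)}$, outgrowing every power of $M$, whereas $\|f_1\|_{L^2}\|f_2\|_{L^2}\approx M$; if $p=2$ and $q<1$, it is $\approx M^{1/q}\gg M$. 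Either way the boundedness fails, so $1\le p\le2$ and $q\ge1$ are necessary, which combined with the previous paragraph completes the proof.

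The main obstacle is the multi-scale family: one must check carefully that $\sigma\in BS^{-n/2}_{0,0}$ with $M$-independent seminorms (the scale-$2^j$ frequency localization is precisely what absorbs the factor $2^{-nj/2}$), that each $g_j$ has the asserted size profile and frequency support, and that the summands $g_j$ are decoupled enough in the $B^0_{p,q}$-norm to legitimize the displayed equivalence. One also uses, in the $p\ge1$ step, the elementary but essential fact that $\|\,\cdot\,\|_{B^0_{p,q}}\approx\|\,\cdot\,\|_{L^p}$ for functions whose Fourier transform is supported in a fixed ball, valid for all $0<p,q\le\infty$; and the reduction from a blow-up family to the failure of boundedness is the closed graph argument already recalled.
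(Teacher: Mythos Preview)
Your sufficiency argument, your proof that $p\ge1$, and your proof that $p\le2$ are all fine (the paper obtains $p\ge1$ and $p\le2$ by scaling a single bump rather than by superposing translates and multiple scales, but your route works just as well). The real problem is the necessity of $q\ge1$.

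You only rule out $q<1$ at the single value $p=2$. For $1\le p<2$ and $0<q<1$ your multi-scale family gives
\[
\|T_\sigma(f_1,f_2)\|_{B^0_{p,q}}
\approx\Big(\sum_{j\in J}2^{njq(1/2-1/p)}\Big)^{1/q},
\]
and since $1/2-1/p<0$ this quantity is bounded uniformly in $M=\#J$ (indeed the full series over $j\ge1$ converges), so the inequality $\|T_\sigma(f_1,f_2)\|_{B^0_{p,q}}\lesssim M$ is never violated and no constraint on $q$ emerges. The leap from ``$q<1$ fails at $p=2$'' to ``$q\ge1$ is necessary for every $p$'' is unjustified, and there is no obvious embedding or interpolation that reduces the case $p<2$ to $p=2$.

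This gap is genuine: with a single bump per dyadic scale the symbol decay $2^{-nj/2}$ forced by membership in $BS^{-n/2}_{0,0}$ always makes $\|g_j\|_{L^p}$ decay in $j$ when $p<2$. The paper overcomes this by placing roughly $2^{\ell n}$ unit-scale bumps (indexed by lattice points $\nu\in\Lambda_\ell$) inside the $\ell$-th dyadic shell, giving each a random sign via Rademacher functions $r_\nu(\omega)$, and then averaging the resulting bound over $\omega$. Khintchine's inequality converts the randomized sum into an $\ell^2$-sum over $\nu$, and the factor $|\Lambda_\ell|\approx2^{\ell n}$ exactly cancels the $2^{-\ell n}$ coming from the symbol, leaving a contribution of size $\ell^{-(1+\epsilon)}$ per shell (after a logarithmic weight in the choice of $f_i$). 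Summing the $q$-th powers yields $\sum_\ell \ell^{-q(1+\epsilon)}$, which is finite only if $q(1+\epsilon)>1$; letting $\epsilon\downarrow0$ forces $q\ge1$, uniformly in $p$. You will need this randomization (or an equivalent device that populates each shell with many independent pieces) to close the argument for $1\le p<2$.
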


By the ``only if" part of Theorem \ref{main2}
and the embedding $B_{p,q_1}^s \hookrightarrow B_{p,q_2}^0$
for $0<p,q_1,q_2 \le \infty$ and $s>0$,
if the $L^2 \times L^2 \to B_{p,q}^s$ boundedness
of all $T_{\sigma}$ with $\sigma \in BS^{-n/2}_{0,0}$ holds,
then $s$ must satisfy $s \le 0$.
This is the reason why
we consider the case $s=0$ in Theorems \ref{main1} and \ref{main2}.

The following says that
Theorem \ref{main1} cannot be compared with the result of \cite{KMT}.

\begin{prop}\label{main3}
There is no embedding relation between
the amalgam space $(L^2, \ell^1)(\R^n)$
and the Besov spaces $B_{p,1}^0(\R^n)$, $1 \le p \le 2$.
\end{prop}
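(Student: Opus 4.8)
The claim is a non-embedding in *both* directions, so the plan is to exhibit two families of test functions: one showing $(L^2,\ell^1) \not\hookrightarrow B_{p,1}^0$, and one showing $B_{p,1}^0 \not\hookrightarrow (L^2,\ell^1)$. I would look for examples that decouple the two features these spaces measure — the amalgam space $(L^2,\ell^1)$ controls $\ell^1$-summability of local $L^2$ masses over a lattice of unit cubes (hence strong spatial decay/localization but no smoothness), whereas $B_{p,1}^0$ controls $\ell^1$-summability over Littlewood–Paley frequency annuli (hence a mild frequency-side regularity but only $L^p$ spatial integrability).

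For the direction $B_{p,1}^0 \not\hookrightarrow (L^2,\ell^1)$ I would take a function that is frequency-localized (so its Besov norm is essentially its $L^p$ norm, by the support of $\widehat f$ meeting only finitely many Littlewood–Paley pieces) but spatially spread out: for instance $f_R(x) = R^{-n/p}\varphi(x/R)$ with $\widehat\varphi$ compactly supported and $\varphi$ not identically zero. Then $\|f_R\|_{B_{p,1}^0}\sim \|f_R\|_{L^p}\sim 1$, while the $(L^2,\ell^1)$ norm counts roughly $R^n$ unit cubes each carrying $L^2$-mass $\sim R^{-n/p}\cdot R^{n/2}\cdot R^{-n/2}=R^{-n/p}$ (being a little careful with the local $L^2$ norm of a slowly varying bump), giving $\|f_R\|_{(L^2,\ell^1)}\gtrsim R^{n}\cdot R^{-n/p}=R^{n(1-1/p)}\to\infty$ since $p\le 2$ forces $1-1/p\ge -1/2$, and in fact for $p\ge 1$ we have $1-1/p\ge 0$; one should check $p=1$ separately, where the exponent is $0$ and a logarithmic or alternative example (e.g. a sum of widely separated bumps with slowly decaying $\ell^1$-critical coefficients) is needed. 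For the reverse direction $(L^2,\ell^1)\not\hookrightarrow B_{p,1}^0$ I would instead use a spatially localized but rough function, e.g. $g(x)=\psi(x)\sum_{j} 2^{-\epsilon j}\, e_j(x)$ built from a lacunary Fourier series or a single bump with a mild singularity, chosen so that its $(L^2,\ell^1)$ norm is finite (compact spatial support makes the $\ell^1$-over-cubes sum finite, equal up to constants to the $L^2$ norm) while the dyadic pieces $\Delta_j g$ have $L^p$ norms that are not $\ell^1$-summable. Concretely, a function whose Fourier transform behaves like $(1+|\xi|)^{-n/2}(\log(2+|\xi|))^{-1}$ near infinity lies in $L^2_{\mathrm{loc}}$ with compact support after multiplying by a cutoff, hence in $(L^2,\ell^1)$, but its Littlewood–Paley pieces have $\|\Delta_j g\|_{L^2}\sim j^{-1}$, whose sum over $j$ diverges; for $p<2$ one adjusts the exponent so that $\|\Delta_j g\|_{L^p}$ still fails $\ell^1$-summability, using that the pieces are frequency-localized at scale $2^j$ so $\|\Delta_j g\|_{L^p}\gtrsim 2^{jn(1/p-1/2)}\|\Delta_j g\|_{L^2}$ only helps.

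The main obstacle I anticipate is the endpoint $p=1$, where the two spaces are closest in spirit ($B_{1,1}^0$ is the smallest of the $B_{1,q}^0$ and sits just above $h^1$, while $(L^2,\ell^1)$ also embeds into $L^1$), so the crude scaling heuristics above become marginal and one must choose the test functions more carefully — likely by superimposing many unit-scale bumps at separated locations with coefficients sitting exactly at the boundary of $\ell^1$, and by superimposing many frequency-annulus pieces with coefficients at the boundary of $\ell^1$, respectively, so that one norm is controlled by an $\ell^2$- or $\ell^{p'}$-type sum that converges while the other is exactly the divergent $\ell^1$ sum. I would also double-check the precise normalization of the amalgam norm and the elementary embeddings recorded in Section \ref{section2}, and verify that the constructed examples genuinely lie in $\Sh'(\R^n)$ so that both quasi-norms are well defined. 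Once the two families are in hand, the proposition follows immediately by contradiction with the definition of a continuous embedding.
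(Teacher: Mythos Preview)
Your overall architecture (two directions, explicit test families) is right, but the execution has a genuine error in the direction $(L^2,\ell^1)\not\hookrightarrow B_{p,1}^0$, and the other direction is left incomplete at $p=1$.

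For the first direction, your dilated bump $f_R=R^{-n/p}\varphi(\cdot/R)$ indeed gives $\|f_R\|_{(L^2,\ell^1)}\approx R^{n(1-1/p)}$, which is bounded when $p=1$; you flag this but do not resolve it. The paper bypasses the construction entirely: since $(L^2,\ell^1)\hookrightarrow L^q$ for every $1\le q\le 2$, an embedding $B_{p,1}^0\hookrightarrow(L^2,\ell^1)$ would force $B_{p,1}^0\hookrightarrow L^q$ for all such $q$, contradicting the known sharpness $B_{p,1}^0\hookrightarrow L^q\Leftrightarrow p=q$. This handles all $p\in[1,2]$ at once.

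The serious gap is in the second direction. Your claimed inequality $\|\Delta_j g\|_{L^p}\gtrsim 2^{jn(1/p-1/2)}\|\Delta_j g\|_{L^2}$ for $p<2$ is the \emph{wrong} direction of Bernstein--Nikolskii; the correct inequality is $\|\Delta_j g\|_{L^2}\lesssim 2^{jn(1/p-1/2)}\|\Delta_j g\|_{L^p}$, which gives only the useless lower bound $\|\Delta_j g\|_{L^p}\gtrsim 2^{jn(1/2-1/p)}\|\Delta_j g\|_{L^2}$. Worse, your radial example actually \emph{belongs} to $B_{p,1}^0$ for $p<2$: if $\widehat g(\xi)\sim|\xi|^{-n/2}(\log|\xi|)^{-1}$ then $\Delta_j g\approx 2^{jn/2}j^{-1}\check\psi(2^j\cdot)$, whence $\|\Delta_j g\|_{L^p}\approx 2^{jn(1/2-1/p)}j^{-1}$, which is summable in $j$ as soon as $p<2$. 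So the proposed counterexample fails precisely where you invoke the incorrect inequality.

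The paper's fix is to abandon radial examples and use a lacunary sum $f_N(x)=\sum_{k=N_0}^{N}e^{i2^k e_1\cdot x}\,\mathcal F^{-1}\phi(x)$ with $\phi$ a fixed Schwartz bump. Each summand sits in a single Littlewood--Paley shell, so $\|f_N\|_{B_{p,1}^0}\approx N$ for every $p$. For the amalgam norm the paper first proves $(L^2,\ell^1)=W_{1,2}$ (Wiener amalgam), which via the uniform frequency decomposition gives $\|f_N\|_{(L^2,\ell^1)}\approx\big\|\big(\sum_{k=N_0}^N|\mathcal F^{-1}\phi|^2\big)^{1/2}\big\|_{L^1}\approx N^{1/2}$. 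The key point your radial construction misses is that the lacunary modulations keep each $\|\Delta_j f_N\|_{L^p}$ of unit size in \emph{every} $L^p$, while the amalgam/$W_{1,2}$ side sees only an $\ell^2$ sum.
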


Related results to bilinear Fourier multiplier operators of $S_{0,0}$-type
on $L^2 \times L^2$ can be found in
Grafakos-He-Slav\'ikov\'a \cite{GHS}
and Slav\'ikov\'a \cite{Slavikova}.

The contents of this paper are as follows.
In Section \ref{section2},
we give preliminary facts.
In Section \ref{section3},
we give basic estimates used in the proof of Theorem \ref{main1}.
In Sections \ref{section4}, \ref{section5} and \ref{section6},
we prove Theorems \ref{main1}, \ref{main2}
and Proposition \ref{main3}, respectively.

\section{Preliminaries}\label{section2}
For two nonnegative quantities $A$ and $B$,
the notation $A \lesssim B$ means that
$A \le CB$ for some unspecified constant $C>0$,
and $A \approx B$ means that
$A \lesssim B$ and $B \lesssim A$.
For $1 \le p \le \infty$,
$p'$ is the conjugate exponent of $p$,
that is, $1/p+1/p'=1$.
The usual inner product of $f, g \in L^2(\R^n)$ is denoted by 
$\la f, g \ra$ .

Let $\Sh(\R^n)$ and $\Sh'(\R^n)$ be the Schwartz space of
rapidly decreasing smooth functions on $\R^n$ and its dual,
the space of tempered distributions, respectively.
We define the Fourier transform $\F f$
and the inverse Fourier transform $\F^{-1}f$
of $f \in \Sh(\R^n)$ by
\[
\F f(\xi)
=\widehat{f}(\xi)
=\int_{\R^n}e^{-ix\cdot\xi} f(x)\, dx
\quad \text{and} \quad
\F^{-1}f(x)
=\frac{1}{(2\pi)^n}
\int_{\R^n}e^{ix\cdot \xi} f(\xi)\, d\xi.
\]
For $m \in L^{\infty}(\R^n)$,
the Fourier multiplier operator $m(D)$ is defined by
$m(D)f=\F^{-1}[m\widehat{f}]$ for $f \in \Sh(\R^n)$.

Let $\{\psi_\ell \}_{\ell \geq 0} $ be a sequence of Schwartz functions on $\R^n$ satisfying
\begin{align} \label{Besov-parti}
\begin{split}
&\supp \psi_0 \subset \{\xi \in \R^n \,:\, |\xi| \leq 2 \},\\
&\supp \psi_\ell \subset \{ \xi \in \R^n \,:\, 2^{\ell -1 }\leq |\xi| \leq 2^{\ell + 1}\},
\quad \ell\geq 1,
\\
&|\partial^{\alpha}\psi_\ell (\xi)| \leq C_\alpha 2^{-\ell|\alpha|}, \quad \ell \geq 0,
\ \alpha \in \N^n_0, \ \xi \in \R^n,
\\
&\sum_{\ell=0}^\infty \psi_\ell(\xi) = 1, \quad \xi \in \R^n.
\end{split}
\end{align}
For $0 < p, q \leq \infty$ and $s \in \R$,
the Besov space $B^s_{p, q}(\R^n)$ consists of all $f \in \Sh'(\R^n)$
such that
\[
\|f\|_{B^s_{p, q}}=
\bigg(  \sum_{\ell=0}^\infty 2^{\ell sq} \|\psi_\ell(D)f\|^q_{L^p} \bigg)^{1/q}<\infty
\]
with usual modification when $q=\infty$.
It is well known that the definition of Besov spaces
$B_{p,q}^s$ is independent of the choice
of $\{\psi_\ell\}_{\ell \geq 0}$ satisfying \eqref{Besov-parti}.
For $0 < p, q_1,q_2 \le \infty$ and $s_1,s_2 \in \R$,
the embedding $B_{p,q_1}^{s_1} \hookrightarrow B_{p,q_2}^{s_2}$ holds
if $s_1=s_2$ and $q_1 \le q_2$, or if $s_1>s_2$.
The dual space of $B^s_{p, q}$
coincides with $B^{-s}_{p^\prime, q^\prime}$,
where $1 \leq p, q < \infty$ and $s \in \R$.
See \cite{Triebel} for more details on Besov spaces.

For $1 \leq p,q \leq \infty$, 
the amalgam space 
$(L^p,\ell^q)(\R^n)$ consists of all measurable functions $f$ on 
$\R^n$ such that 
\begin{equation*}
\| f \|_{ (L^p,\ell^q)} 
=\bigg\{ \sum_{\nu \in \Z^n}
\bigg( \int_{\nu+[-1/2,1/2]^n} \big| f(x) \big|^p \, dx 
\bigg)^{q/p} \bigg\}^{1/q} 
< \infty  
\end{equation*}
with usual modification when $p$ or $q$ is infinity.  
Obviously, $(L^p,\ell^p) = L^p$,
and $(L^{p_1}, \ell^{q_1})  
\hookrightarrow (L^{p_2},\ell^{q_2})$
for $p_1 \geq p_2$ and $q_1 \leq q_2$.
In particular, 
$(L^2,\ell^1) \hookrightarrow L^r$ 
for $1 \leq r \leq 2$,
and 
the stronger embedding 
$(L^2,\ell^1) \hookrightarrow h^1$ holds
in the case $r=1$,
where $h^1$ is the local Hardy space
(see \cite[Section 2.3]{KMT-2}).

We end this section by quoting the following, which is called Schur's lemma
(see, e.g., \cite[Appendix A]{Grafakos-Modern}).

\begin{lem}\label{Schur's lemma}
Let $\{A_{k_1, k_2}\}_{k_1, k_2 \ge 0}$ be a sequence of nonnegative numbers satisfying
\begin{align*}
\sup_{k_1 \ge 0} \sum_{k_2 \ge 0} A_{k_1, k_2} < \infty
\quad \text{and} \quad
\sup_{k_2 \ge 0} \sum_{k_1 \ge 0} A_{k_1, k_2} < \infty.
\end{align*}
Then
\begin{align*}
\sum_{k_1, k_2 \ge 0} A_{k_1, k_2} b_{k_1} c_{k_2} 
\lesssim \Big(\sum_{k_1 \ge 0} b_{k_1}^2 \Big)^{1/2}\Big(\sum_{k_2 \ge 0} c_{k_2}^2 \Big)^{1/2}
\end{align*}
for all nonnegative sequences $\{b_{k_1}\}_{k_1 \ge 0}$ and $\{c_{k_2}\}_{k_2 \ge 0}$.
\end{lem}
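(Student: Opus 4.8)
The plan is to deduce the inequality from the Cauchy--Schwarz inequality by splitting the kernel $A_{k_1,k_2}$ symmetrically. Set
\[
M_1=\sup_{k_1 \ge 0}\sum_{k_2 \ge 0}A_{k_1,k_2}
\quad\text{and}\quad
M_2=\sup_{k_2 \ge 0}\sum_{k_1 \ge 0}A_{k_1,k_2},
\]
both of which are finite by hypothesis. The goal is to establish the bound with implied constant $(M_1M_2)^{1/2}$.

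First I would write $A_{k_1,k_2}=A_{k_1,k_2}^{1/2}\cdot A_{k_1,k_2}^{1/2}$, which is legitimate since all entries are nonnegative, and pair one factor with $b_{k_1}$ and the other with $c_{k_2}$. Applying the Cauchy--Schwarz inequality in the double index $(k_1,k_2)$ then gives
\[
\sum_{k_1,k_2 \ge 0}A_{k_1,k_2}b_{k_1}c_{k_2}
\le
\Big(\sum_{k_1,k_2 \ge 0}A_{k_1,k_2}b_{k_1}^2\Big)^{1/2}
\Big(\sum_{k_1,k_2 \ge 0}A_{k_1,k_2}c_{k_2}^2\Big)^{1/2}.
\]
The point of this particular factorization is that each of the two resulting double sums depends on only one of the two sequences, so each can be controlled by exactly one of the two hypotheses.

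To finish, I would evaluate the inner sums in the appropriate order. For the first factor, summing in $k_2$ first yields
\[
\sum_{k_1,k_2 \ge 0}A_{k_1,k_2}b_{k_1}^2
=\sum_{k_1 \ge 0}b_{k_1}^2\sum_{k_2 \ge 0}A_{k_1,k_2}
\le M_1\sum_{k_1 \ge 0}b_{k_1}^2,
\]
and symmetrically, summing in $k_1$ first, the second factor is bounded by $M_2\sum_{k_2 \ge 0}c_{k_2}^2$. Substituting these back produces the claimed estimate with constant $(M_1M_2)^{1/2}$. There is no genuine obstacle here; the only point requiring a little care is the choice of the symmetric splitting of $A_{k_1,k_2}$, since pairing each half-power with the matching sequence is precisely what decouples the two sums and allows the row-sum and column-sum hypotheses to act independently.
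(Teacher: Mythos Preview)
Your proof is correct and is precisely the standard argument for Schur's lemma. The paper itself does not supply a proof of this lemma; it merely quotes the result and refers the reader to \cite[Appendix A]{Grafakos-Modern}, where exactly this Cauchy--Schwarz splitting appears.
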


\section{Basic estimates}\label{section3}
For $R > 0$ and $L > n$, we set 
\begin{align*}
  S_R(f)(x) = R^n\int_{\R^n} \frac{|f(y)|}{(1+R|x-y|)^L}\, dy,
\end{align*}
and simply write $S_1(f)(x) = S(f)(x)$.

 The following lemma can be found in the proof of \cite[Lemma 4.2]{Kato}, but we will give the proof for the reader's convenience.

\begin{lem} \label{lem-dyadic-est}
 Let $\varphi \in \Sh(\R^n)$.  Then, we have
 \begin{align*}
  \bigg(\sum_{\nu \in \Z^n} |\varphi(R^{-1}(D -\nu))f(x)|^2 \bigg)^{1/2} 
  \lesssim R^{n/2}S_{R}(|f|^2)(x)^{1/2}
 \end{align*}
 for all $R \ge 1$, where $\varphi(R^{-1}(D -\nu))f = \F^{-1}[\varphi(R^{-1}(\cdot -\nu)) \widehat{f}]$.
\end{lem}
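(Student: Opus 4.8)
The plan is to reduce the vector-valued estimate to a pointwise bound via a discrete version of the standard ``$\varphi(D-\nu)$ is controlled by a maximal-type average'' principle. First I would normalize by dilation: writing $f_R = f(R^{-1}\cdot)$ or rather rescaling the frequency variable, the operator $\varphi(R^{-1}(D-\nu))$ acting on $f$ is, after the change of variables $\xi \mapsto R\xi$, essentially $\varphi(D - R^{-1}\nu)$ acting on a rescaled function; but since $\nu$ ranges over $\Z^n$ and $R \ge 1$, it is cleaner to keep $R$ and just track kernels directly. Concretely, $\varphi(R^{-1}(D-\nu))f(x) = e^{i x\cdot \nu}\,\bigl(K_R * (e^{-i(\cdot)\cdot\nu} f)\bigr)(x)$ where $K_R(x) = R^n \check{\varphi}(Rx)$ is the kernel of $\varphi(R^{-1}D)$, and since $\varphi \in \Sh$, for any $L > n$ we have $|K_R(x)| \lesssim R^n (1+R|x|)^{-L}$.

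The key step is then to bound $\sum_{\nu \in \Z^n} |(\varphi(R^{-1}(D-\nu))f)(x)|^2$. Using the kernel representation, the modulation $e^{ix\cdot\nu}$ has modulus one and drops out, so I would write
\[
\varphi(R^{-1}(D-\nu))f(x) = \int_{\R^n} e^{i(x-y)\cdot\nu} K_R(x-y) f(y)\, dy = \widehat{g_x}(-\nu),
\]
where $g_x(y) = K_R(x-y) f(y)$. Now $\sum_{\nu \in \Z^n} |\widehat{g_x}(-\nu)|^2$ is exactly the quantity controlled by the periodization/sampling principle: by Plancherel on the torus applied to the $1$-periodization of $g_x$, one has $\sum_{\nu \in \Z^n} |\widehat{g_x}(\nu)|^2 = \int_{[0,1]^n} |\sum_{k \in \Z^n} g_x(y+k)|^2\, dy \le \int_{[0,1]^n} \bigl(\sum_{k} |g_x(y+k)|\bigr)^2 dy$. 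To get an $L^2$-type bound of $g_x$ out of this I would instead invoke the cleaner form: $\sum_{\nu} |\widehat{g_x}(\nu)|^2 \lesssim \sum_{k \in \Z^n} \sup_{y \in k + [0,1]^n} |g_x(y)|^2$ after a routine Schwartz-tail argument, and then, since $L > n$ and $R \ge 1$, the supremum of $(1+R|x-y|)^{-L}$ over a unit cube is comparable to its value at a representative point with the loss absorbed into the exponent (replace $L$ by, say, $L' = L$ with a slightly larger fixed exponent chosen at the start, still $> n$), giving $\sum_{k}\sup_{y \in k+[0,1]^n}|g_x(y)|^2 \lesssim \sum_{k}\sup_{y\in k+[0,1]^n} R^{2n}(1+R|x-y|)^{-2L}|f(y)|^2$.

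The last step converts the sum over cubes back into the integral $S_R(|f|^2)$. Since we need a factor $R^{n}$ out front to match $R^{n/2} S_R(|f|^2)(x)^{1/2}$ inside the square root, I would combine one factor $R^n$ from $R^{2n} = R^n \cdot R^n$ with the Riemann-sum comparison $\sum_{k} \sup_{y \in k+[0,1]^n} h(y) \lesssim \int_{\R^n} h^*(y)\,dy$ where $h(y) = R^n (1+R|x-y|)^{-2L}|f(y)|^2$ and $h^*$ is a slightly dilated version; using $R \ge 1$ the unit-cube oscillation of $(1+R|x-y|)^{-2L}$ only changes the constant and the effective exponent stays above $n$, so $\sum_k \sup \lesssim R^n \int_{\R^n} (1+R|x-y|)^{-L}|f(y)|^2\,dy = R^n S_R(|f|^2)(x)$ (having fixed at the outset an $L$ in the definition of $S_R$ with $L > n$ and started the kernel estimate from exponent $2L + n$ or any sufficiently large value). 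Taking square roots yields the claimed bound. The main obstacle I anticipate is purely bookkeeping: making the passage from $\sum_{\nu}|\widehat{g_x}(\nu)|^2$ to a genuine integral of $|f|^2$ against an $L^1$-normalized (times $R^n$) kernel clean and dilation-consistent, i.e. choosing the exponents so that every unit-cube-to-integral comparison costs only a constant uniformly in $R \ge 1$ and $x$; none of the individual estimates is hard, but one must be careful that the exponent $L$ appearing in $S_R$ is the fixed one from the hypothesis and that all intermediate larger exponents are harmless.
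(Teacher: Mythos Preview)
Your overall strategy---writing $\varphi(R^{-1}(D-\nu))f(x)$ as $\widehat{g_x}(\nu)$ (up to a unimodular factor) with $g_x(y)=K_R(x-y)f(y)$ and then invoking Parseval on the torus for the periodization---is exactly the paper's approach. The gap is in the two intermediate steps where you pass through pointwise suprema over unit cubes.

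First, the claimed inequality $\sum_\nu |\widehat{g_x}(\nu)|^2 \lesssim \sum_k \sup_{y\in k+[0,1]^n}|g_x(y)|^2$ is false in general: take $g$ to be (a smooth version of) $\chi_{[0,N]}$ in one dimension, so that $|\widehat{g}(0)|^2 \approx N^2$ while $\sum_k \sup_{Q_k}|g|^2 \approx N$. Whatever ``Schwartz-tail argument'' you have in mind for the specific $g_x$ must use the decay of $K_R$ as Cauchy--Schwarz weights on the periodization, and done correctly that step produces an integral of $|K_R|\,|f|^2$, not a sum of suprema of $|K_R|^2|f|^2$.

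Second, and more seriously, the ``Riemann-sum comparison'' $\sum_k \sup_{Q_k} h \lesssim \int h^*$ points the wrong way: a supremum over a unit cube dominates the average, not conversely. Since your $h$ contains the factor $|f|^2$ with $f$ an arbitrary $L^2$ function, $\sup_{Q_k}|f|^2$ can be arbitrarily larger than $\int_{Q_k}|f|^2$ (and need not even be finite). Moreover, for large $R$ the kernel $(1+R|x-y|)^{-2L}$ is \emph{not} essentially constant on unit cubes near $y=x$, so the ``comparable to a representative value'' claim also fails there.

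The repair is short and is precisely what the paper does: after Parseval, apply Cauchy--Schwarz directly to the periodized sum $\sum_\mu \Phi(R(x-y-2\pi\mu))f(y+2\pi\mu)$ with weights $|\Phi(R(x-y-2\pi\mu))|$, where $\Phi=\F^{-1}\varphi$. Since $R\ge 1$ gives $(1+R|z-2\pi\mu|)^{-(n+1)}\le(1+|z-2\pi\mu|)^{-(n+1)}$, one has $\sup_z\sum_\mu |\Phi(R(z-2\pi\mu))| \lesssim 1$, and hence
\[
\sum_{\nu\in\Z^n} |\varphi(R^{-1}(D-\nu))f(x)|^2 \lesssim R^{2n}\int_{\R^n}|\Phi(R(x-y))|\,|f(y)|^2\,dy \lesssim R^n S_R(|f|^2)(x),
\]
with no suprema involved. (A minor bookkeeping point: with the Fourier convention used here, sampling $\widehat{g_x}$ at $\nu\in\Z^n$ corresponds to periodizing over $2\pi\Z^n$, so the cubes should be $2\pi\mu+[-\pi,\pi]^n$ rather than $k+[0,1]^n$.)
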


\begin{proof}
Since $\R^n = \cup_{\mu \in \Z^n} (2\pi\mu + [-\pi, \pi]^n)$,
we have
 \begin{align*}
 &\varphi(R^{-1}(D-\nu))f(x)
 = R^n \int_{\R^n} e^{i\nu\cdot(x-y)} 
   \Phi(R(x-y)) f(y) \,dy \\
  &= R^n e^{i\nu\cdot x}\sum_{\mu \in \Z^n} \int_{2\pi\mu + [-\pi, \pi]^n} e^{-i\nu\cdot y} 
   \Phi(R(x-y)) f(y) \,dy\\
  &= R^n e^{i\nu\cdot x}  \int_{[-\pi, \pi]^n} e^{-i\nu\cdot y} 
   \Big(
   \sum_{\mu \in \Z^n} \Phi(R(x-y-2\pi\mu)) f(y+2\pi\mu)
   \Big)\,
    dy,
 \end{align*}
where $\Phi = \F^{-1}\varphi$.
Here, $\sum_{\mu \in \Z^n} \Phi(R(x-y-2\pi\mu)) f(y+2\pi\mu)$
is a $(2\pi \Z)^n$-periodic function of the $y$-variable. Hence, it follows
from Parseval's identity that
 \begin{align*}
  &\Big( \sum_{\nu \in \Z^n} |\varphi(R^{-1}(D-\nu))f(x)|^2 \Big)^{1/2} \\
  &=(2\pi)^{n/2} R^n 
  \Big(
  \int_{[-\pi, \pi]^n} \Big| \sum_{\mu \in \Z^n}\Phi(R(x-y-2\pi\mu)) f(y+2\pi\mu) \Big|^2 dy
  \Big)^{1/2}.
 \end{align*}
 Since
 \begin{align*}
  \sup_{z \in \R^n}\sum_{\mu \in \Z^n} |\Phi(R(z-2\pi\mu))|
   &\lesssim 
   \sup_{z \in \R^n}\sum_{\mu \in \Z^n} (1+R|z-2\pi\mu|)^{-(n+1)}\\
   &\leq 
   \sup_{z \in \R^n}\sum_{\mu \in \Z^n} (1+|z-2\pi\mu|)^{-(n+1)}
   \lesssim 1,
 \end{align*}
 where we used the assumption $R \ge 1$ in the second inequality, we have by Schwarz's inequality
 \begin{align*}
  &\Big(
  \int_{[-\pi, \pi]^n} \Big| \sum_{\mu \in \Z^n}\Phi(R(x-y-2\pi\mu)) f(y+2\pi\mu) \Big|^2 dy
  \Big)^{1/2}\\
  &\lesssim 
  \Big(
  \int_{[-\pi, \pi]^n}  \sum_{\mu \in \Z^n} |\Phi(R(x-y-2\pi\mu))| |f(y+2\pi\mu)|^2 dy
  \Big)^{1/2}\\
  &=\Big(
      \int_{\R^n} |\Phi(R(x-y))| |f(y)|^2 dy
      \Big)^{1/2}.
 \end{align*}
 Since $\Phi$ is a rapidly decreasing function, this gives the desired estimate.
\end{proof}

In the rest of this section, we assume that $m_1, m_2 \in \R$ and
$\sigma \in BS^{(m_1, m_2)}_{0, 0}$, and  shall consider the decomposition of $\sigma$. 
Let $\{\psi_j\}_{j \geq 0}$ be as in (\ref{Besov-parti}), and let $\varphi \in \Sh(\R^n)$
be such that
\begin{align*}
 \supp \varphi \subset [-1, 1]^n
 \quad \text{and} \quad
 \sum_{\nu \in \Z^n} \varphi(\xi- \nu) = 1, \quad \xi \in \R^n.
\end{align*} 
Using these functions, we decompose $\sigma$ as 
\begin{align} \label{decomp-symb}
\sigma(x, \xi_1, \xi_2) 
= \sum_{j \in \N_0} \sum_{\substack{\K \in (\N_0)^2 \\ \K= (k_1, k_2)}} \sigma_{j, \K} (x, \xi_1, \xi_2)
= \sum_{j \in \N_0} \sum_{\substack{\K \in (\N_0)^2 \\ \K= (k_1, k_2)}}  \sum_{\substack{\Nu \in (\Z^n)^2 \\ \Nu= (\nu_1, \nu_2)}} \sigma_{j, \K, \Nu} (x, \xi_1, \xi_2)
\end{align}
with
\begin{align*}
\sigma_{j, \K} (x, \xi_1, \xi_2) 
&=  [\psi_j(D_x)\sigma](x, \xi_1, \xi_2)\psi_{k_1}(\xi_1)\psi_{k_2}(\xi_2)\\
&= \Big( 
    \int_{\R^n} \F^{-1}\psi_j(y) \sigma(x-y, \xi_1, \xi_2)\, dy 
    \Big) \psi_{k_1}(\xi_1)\psi_{k_2}(\xi_2)
\end{align*}
and
\begin{align*}
\sigma_{j, \K, \Nu} (x, \xi_1, \xi_2) =
\sigma_{j, \K} (x, \xi_1, \xi_2)  \varphi(\xi_1-\nu_1) \varphi(\xi_2-\nu_2).
\end{align*}

\begin{lem} \label{lem-symb-est}
$(1)$
For each $\beta_1, \beta_2 \in (\N_0)^n$ and $N \in \N_0$, we have
\begin{align*}
|\pa^{\beta_1}_{\xi_1} \pa^{\beta_2}_{\xi_2} \sigma_{j, \K, \Nu}(x, \xi_1, \xi_2)| \lesssim 2^{k_1m_1 +k_2m_2 -jN}
\end{align*}
for  all $j \in \N_0, \,  \K = (k_1, k_2) \in (\N_0)^2$ and $\Nu \in (\Z^n)^2$.\\
$(2)$ For each $N \in \N_0$, we have
\begin{align*}
|T_{\sigma_{j, \K,\Nu}}(f_1, f_2)(x) | \lesssim 2^{k_1m_1 +k_2m_2 -jN}  S(f_1)(x) S(f_2)(x)
\end{align*}
for all $j \in \N_0, \, \K = (k_1, k_2) \in (\N_0)^2$ and $\Nu \in (\Z^n)^2$.
\end{lem}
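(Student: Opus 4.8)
The plan is to prove part (1) by a direct symbol estimate and then deduce part (2) by viewing $T_{\sigma_{j,\K,\Nu}}$ as a composition of Fourier multipliers with a bilinear operator whose kernel is controlled by the operator $S$.

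For part (1), I would start from the identity
\[
\sigma_{j,\K,\Nu}(x,\xi_1,\xi_2)
=\Big(\int_{\R^n}\F^{-1}\psi_j(y)\,\sigma(x-y,\xi_1,\xi_2)\,dy\Big)
\psi_{k_1}(\xi_1)\psi_{k_2}(\xi_2)\varphi(\xi_1-\nu_1)\varphi(\xi_2-\nu_2).
\]
Differentiating in $\xi_1,\xi_2$ and using the Leibniz rule, each derivative falls either on $\psi_{k_i}$ (gaining a factor $2^{-k_i|\cdot|}$ by \eqref{Besov-parti}), on $\varphi(\xi_i-\nu_i)$ (bounded, with compactly supported derivatives), or on the $y$-integral. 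On the support of $\psi_{k_i}$ we have $1+|\xi_i|\approx 2^{k_i}$, so the $BS^{(m_1,m_2)}_{0,0}$ bound on $\partial^{\beta_1}_{\xi_1}\partial^{\beta_2}_{\xi_2}\sigma(x-y,\xi_1,\xi_2)$ contributes $2^{k_1 m_1 + k_2 m_2}$ (with constants depending on $\beta_1,\beta_2$), after absorbing the $2^{-k_i|\cdot|}$ gains, which only help since we may assume $k_i\ge 1$ and handle $k_i=0$ separately as a harmless bounded case. The only delicate factor is the $y$-integral $\int \F^{-1}\psi_j(y)\,\partial^{\beta_1}_{\xi_1}\partial^{\beta_2}_{\xi_2}\sigma(x-y,\xi_1,\xi_2)\,dy$: since $\partial^{\beta_1}_{\xi_1}\partial^{\beta_2}_{\xi_2}\sigma$ is bounded in $x$, this is at most $\|\F^{-1}\psi_j\|_{L^1}\lesssim 1$, which is not enough to produce the decay $2^{-jN}$. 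To gain that decay I would instead exploit cancellation: for $j\ge 1$, $\psi_j(\xi)=\psi_1(2^{-(j-1)}\xi)$-type scaling gives $\F^{-1}\psi_j(y)=2^{(j-1)n}\F^{-1}\psi_1(2^{j-1}y)$ with $\int y^\gamma \F^{-1}\psi_1(2^{j-1}y)\,2^{(j-1)n}\,dy=0$ for all $\gamma$ (because $\psi_1$ vanishes near the origin, so all its derivatives vanish at $0$). Taylor-expanding $\sigma(x-y,\cdot)$ in $y$ to order $N$, the polynomial part integrates to zero, and the remainder is $O(|y|^N)$ times a bounded $x$-derivative of $\sigma$ (here I use that $\partial^\alpha_x\partial^{\beta_1}_{\xi_1}\partial^{\beta_2}_{\xi_2}\sigma$ is bounded for all $\alpha$); since $\int |y|^N |\F^{-1}\psi_j(y)|\,dy\lesssim 2^{-jN}$ by the scaling, this yields the factor $2^{-jN}$. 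For $j=0$ the claimed bound with $N=0$ is immediate and larger $N$ is not needed there. Combining the three factors gives $2^{k_1 m_1 + k_2 m_2 - jN}$.

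For part (2), I would write $T_{\sigma_{j,\K,\Nu}}(f_1,f_2)(x)$ out as the double integral and separate the frequency cutoffs from the rest. Precisely, $\widehat{f_i}\psi_{k_i}\varphi(\cdot-\nu_i)=\widehat{g_i}$ where $g_i=\psi_{k_i}(D)\varphi(D-\nu_i)f_i$, i.e.\ $g_i$ is a smooth Fourier localization of $f_i$ to the cube $\nu_i+[-1,1]^n$ (intersected with the annulus $|\xi_i|\approx 2^{k_i}$). Since $\varphi$ and $\psi_{k_i}$ are Schwartz, the convolution kernel of $\psi_{k_i}(D)\varphi(D-\nu_i)$ is dominated pointwise by a Schwartz function (uniformly in $\nu_i$, and with good behavior in $k_i$), so $|g_i(x)|\lesssim S(f_i)(x)$ after a standard convolution-against-Schwartz-kernel estimate. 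Then $T_{\sigma_{j,\K,\Nu}}(f_1,f_2)(x)=\int\int e^{ix\cdot(\xi_1+\xi_2)}\sigma_{j,\K,\Nu}(x,\xi_1,\xi_2)\widehat{g_1}(\xi_1)\widehat{g_2}(\xi_2)\,d\xi_1 d\xi_2$ up to the $(2\pi)^{-2n}$ constant, and since each $\widehat{g_i}$ is supported in a unit cube centered at $\nu_i$, I may integrate by parts in $\xi_1,\xi_2$ against $e^{i(x-\cdot)\cdot\xi_i}$-type phases using the symbol bounds from part (1): the kernel $K(x,y_1,y_2)=\F^{-1}_{\xi_1,\xi_2}\big[\sigma_{j,\K,\Nu}(x,\cdot,\cdot)\big](x-y_1,x-y_2)$ then satisfies $|K(x,y_1,y_2)|\lesssim 2^{k_1 m_1+k_2 m_2 - jN}(1+|x-y_1|)^{-L}(1+|x-y_2|)^{-L}$ for any $L$, because the $\xi_i$-integration is over a fixed unit cube and each integration by parts costs only the bounded factor $2^{k_1 m_1 + k_2 m_2 - jN}$ from part (1). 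Writing $T_{\sigma_{j,\K,\Nu}}(f_1,f_2)(x)=\int\int K(x,x-y_1,x-y_2)g_1(y_1)g_2(y_2)\,dy_1 dy_2$... actually more cleanly: the operator factors as $g_i\mapsto$ Schwartz-convolution and a residual bilinear operator with kernel decaying like $(1+|x-y_i|)^{-L}$, so
\[
|T_{\sigma_{j,\K,\Nu}}(f_1,f_2)(x)|
\lesssim 2^{k_1 m_1 + k_2 m_2 - jN}\, S(|g_1|)(x)\, S(|g_2|)(x)
\lesssim 2^{k_1 m_1 + k_2 m_2 - jN}\, S(f_1)(x)\, S(f_2)(x),
\]
using $S(|g_i|)\lesssim S(S(f_i))\lesssim S(f_i)$ from the semigroup-type property $S\circ S\lesssim S$ of the maximal-like operator $S$ (which follows from $\int (1+|x-z|)^{-L}(1+|z-y|)^{-L}\,dz\lesssim (1+|x-y|)^{-L}$ for $L>n$).

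The main obstacle I anticipate is the gain of $2^{-jN}$ in part (1): the naive bound on the $\F^{-1}\psi_j$-convolution only gives $O(1)$, and extracting arbitrary-order decay requires carefully using the vanishing moments of $\F^{-1}\psi_j$ (equivalently, that $\psi_j$ is supported away from the origin for $j\ge 1$) together with the uniform boundedness of all $x$-derivatives of $\sigma$; once that is in place, everything else is a routine combination of Schwartz-kernel estimates and non-stationary-phase integration by parts over the fixed unit cubes carrying $\varphi(\xi_i-\nu_i)$.
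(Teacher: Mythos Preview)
Your proposal is correct and follows essentially the same approach as the paper: part (1) via the vanishing moments of $\F^{-1}\psi_j$ combined with a Taylor expansion in the $x$-variable, and part (2) via integration by parts over the unit cubes $\nu_i+[-1,1]^n$ to obtain the kernel bound $|K(x,y_1,y_2)|\lesssim 2^{k_1m_1+k_2m_2-jN}(1+|y_1|)^{-L}(1+|y_2|)^{-L}$. The only difference is that in part (2) the paper applies this kernel bound directly to $f_1,f_2$ to get $S(f_1)S(f_2)$ in one step, whereas your detour through $g_i=\psi_{k_i}(D)\varphi(D-\nu_i)f_i$ and the relation $S\circ S\lesssim S$ is unnecessary (and your displayed identity $T_{\sigma_{j,\K,\Nu}}(f_1,f_2)=\int\!\!\int e^{ix\cdot(\xi_1+\xi_2)}\sigma_{j,\K,\Nu}\,\widehat{g_1}\widehat{g_2}$ double-counts the cutoffs; the correct factorization has only $[\psi_j(D_x)\sigma]$ acting on $g_1,g_2$).
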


\begin{proof}
We first prove the assertion $(1)$.
Let $j \geq 1$. By the moment condition of $\F^{-1}\psi_j$
and Taylor's formula, we have
\begin{align*}
 &\sigma_{j, \K, \Nu}(x, \xi_1, \xi_2) \\
 &= \int_{\R^n} \F^{-1}\psi_j(y) 
   \Big( 
   \sigma_{\K, \Nu}(x-y, \xi_1, \xi_2) 
   - \sum_{|\alpha| < N} \frac{(-y)^{\alpha}}{\alpha!} 
   [\partial_x^\alpha     
   \sigma_{\K, \Nu}](x, \xi_1, \xi_2) 
   \Big) \,dy\\
 &= \int_{\R^n} \F^{-1}\psi_j(y)
   \Big(
   N \sum_{|\alpha| = N}  \frac{(-y)^{\alpha}}{\alpha!} 
   \int_0^1 (1-t)^{N-1} [\partial_x^\alpha     
   \sigma_{\K, \Nu}](x-ty, \xi_1, \xi_2)\, dt
   \Big) \,dy ,
\end{align*}
where
\[
\sigma_{\K, \Nu}(x, \xi_1, \xi_2)
=\sigma(x,\xi_1,\xi_2)
\psi_{k_1}(\xi_1)\psi_{k_2}(\xi_2)
\varphi(\xi_1-\nu_1) \varphi(\xi_2-\nu_2) .
\]
Since $1 + |\xi_i| \approx 2^{k_i}$ for $\xi_i \in \supp \psi_{k_i}$, 
we see that
\begin{align*}
 |\partial_x^\alpha \partial_{\xi_1}^{\beta_1} \partial_{\xi_2}^{\beta_2}      
   \sigma_{\K, \Nu}(x, \xi_1, \xi_2)| \lesssim 2^{k_1m_1+k_2m_2}.
\end{align*}
Hence,
\begin{align*}
 &|\partial_{\xi_1}^{\beta_1} \partial_{\xi_2}^{\beta_2}      
 \sigma_{j, \K, \Nu}(x, \xi_1, \xi_2)|\\
  &\lesssim \int_{\R^n} |\F^{-1}\psi_j(y)|
   \Big(
   \sum_{|\alpha| = N}  |y^{\alpha}|
   \int_0^1 |[\partial_x^\alpha \partial_{\xi_1}^{\beta_1} \partial_{\xi_2}^{\beta_2}  
   \sigma_{\K, \Nu}](x-ty, \xi_1, \xi_2)|\,dt
   \Big) \,dy\\
 &\lesssim 2^{k_1m_1+k_2m_2} \int_{\R^n} |\F^{-1}\psi_j(y)| |y|^N \, dy
 \lesssim 2^{k_1m_1+k_2m_2-jN},
\end{align*}
where we used the inequality $|\F^{-1}\psi_j(y)| \lesssim 2^{jn}(1+2^j|y|)^{-(N+n+1)}$. If we do not use the moment condition in the above argument, we have the same estimate with $j=0$. The proof of the assertion $(1)$ is complete.

Next, we will show the assertion (2). We write
\begin{align*}
 T_{\sigma_{j, \K, \Nu}}(f_1, f_2)(x) 
 = \int_{(\R^n)^2}  K_{j, \K, \Nu}(x, x-y_1, x-y_2) f_1(y_1)f_2(y_2)\, dy_1 dy_2,
\end{align*}
where $K_{j, \K, \Nu}$ is defined by
\begin{align*}
  K_{j, \K, \Nu}(x, y_1, y_2) =
  \frac{1}{(2\pi)^{2n}}\int_{(\R^n)^2} e^{i(y_1 \cdot \xi_1+y_2 \cdot \xi_2)} \sigma_{j, \K, \Nu}(x, \xi_1, \xi_2) \, d\xi_1 d\xi_2.
\end{align*}
It follows from integration by parts,
the assertion (1) and the support condition
$\supp \sigma_{j, \K, \Nu}(x,\cdot,\cdot) \subset (\nu_1, \nu_2) + [-1, 1]^{2n}$
for each $x \in \R^n$
that
\begin{align*}
 |K_{j, \K, \Nu}(x, y_1, y_2)| \lesssim 2^{k_1m_1+k_2m_2 -jN}(1+|y_1|)^{-L} (1+|y_2|)^{-L},
\end{align*}
which gives the desired result.
\end{proof}

The following lemma plays a crucial role in the proof of Theorem \ref{main1},
and its essence goes back to \cite[Lemma 3.6]{MT}.

\begin{lem} \label{lem-L2L2Lr-est}
Let $2 \leq r \leq \infty$, and let $\Lambda$ be a finite subset of $\Z^n$.
For each $N \ge 0$, we have
 \begin{multline*} 
  \Big(
  \sum_{\nu_1 \in \Lambda} \sum_{\nu_2 \in \Z^n} + 
  \sum_{\nu_1 \in \Z^n} \sum_{\nu_2 \in \Lambda} +
  \sum_{\mu \in \Lambda} \sum_{\nu_1+\nu_2 = \mu}
  \Big)
  |\la T_{\sigma_{j, \K, \Nu}}(f_1, f_2), g \ra|
  \\
 \lesssim 2^{k_1 m_1+k_2 m_2 -jN}
 |\Lambda|^{1/2} \|f_1\|_{L^2}\|f_2\|_{L^2} \|g\|_{L^r}
 \end{multline*}
for all $j \in \N_0$ and $\K=(k_1,k_2) \in (\N_0)^2$,
where $|\Lambda|$ is the number of elements of $\Lambda$.
\end{lem}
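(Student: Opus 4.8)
The plan is to dualize and, with $j$ and $\K$ fixed, exploit two frequency localizations. On the input side, since $\sigma_{j,\K,\Nu}(x,\xi_1,\xi_2)$ carries the factor $\psi_{k_i}(\xi_i)\varphi(\xi_i-\nu_i)$ (supported in $\nu_i+[-1,1]^n$), one has $T_{\sigma_{j,\K,\Nu}}(f_1,f_2)=T_{\sigma_{j,\K,\Nu}}(g_{1,\nu_1},g_{2,\nu_2})$, where $g_{i,\nu_i}=\widetilde\varphi(D-\nu_i)f_i$ for a fixed $\widetilde\varphi\in\Sh(\R^n)$ equal to $1$ on $[-1,1]^n$ and supported in $[-2,2]^n$; here $\sum_{\nu_i}\|g_{i,\nu_i}\|_{L^2}^2\lesssim\|f_i\|_{L^2}^2$ and, by Bernstein's inequality, $\|g_{i,\nu_i}\|_{L^p}\lesssim\|g_{i,\nu_i}\|_{L^2}$ for $2\le p\le\infty$. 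On the output side, inspecting the $x$-Fourier support of $\psi_j(D_x)\sigma$ shows that $T_{\sigma_{j,\K,\Nu}}(f_1,f_2)$ has Fourier support in a ball of radius $\lesssim 2^j$ about $\nu_1+\nu_2$; hence, with $\Theta\in\Sh(\R^n)$ a fixed real bump equal to $1$ on a large enough ball about the origin and $\Theta_\mu(\xi)=\Theta(2^{-j}(\xi-\mu))$, the operator $\Theta_{\nu_1+\nu_2}(D)$ acts as the identity on $T_{\sigma_{j,\K,\Nu}}(f_1,f_2)$, so that
\[
\la T_{\sigma_{j,\K,\Nu}}(f_1,f_2),g\ra=\la T_{\sigma_{j,\K,\Nu}}(g_{1,\nu_1},g_{2,\nu_2}),\Theta_{\nu_1+\nu_2}(D)g\ra .
\]
I would also record the standing tools: Lemma \ref{lem-symb-est}(2); Lemma \ref{lem-dyadic-est}, which (with $R=1$, via Schwarz's inequality and $S\circ S\lesssim S$) gives $\sum_{\nu_i}S(g_{i,\nu_i})(x)^2\lesssim S(|f_i|^2)(x)$ and (with $R=2^j$) gives $\big(\sum_{\mu\in\Z^n}|\Theta_\mu(D)g(x)|^2\big)^{1/2}\lesssim 2^{jn/2}S_{2^j}(|g|^2)(x)^{1/2}$; the $L^p$-boundedness ($1\le p\le\infty$) of $S$ and of $S_R$ with constants independent of $R$, together with $\|S_R(h)\|_{L^1}\lesssim\|h\|_{L^1}$; and $\|\Theta_\mu(D)g\|_{L^p}\lesssim\|g\|_{L^p}$.

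Next I would reduce to the endpoints $r=2$ and $r=\infty$: for fixed $f_1,f_2,j,\K,\Lambda$ the left-hand side is $\|\mathcal{L}g\|_{\ell^1}$ for an operator $\mathcal{L}$ linear in $g$, so the general case $2\le r\le\infty$ follows from the two endpoints by interpolation.

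For the first two sums it suffices, by symmetry, to treat $\sum_{\nu_1\in\Lambda}\sum_{\nu_2\in\Z^n}$. Fixing $\nu_1$, I would apply Lemma \ref{lem-symb-est}(2) and the displayed identity, use Schwarz's inequality in $\nu_2$ to bound the inner sum by
\[
2^{k_1m_1+k_2m_2-jN}\int S(g_{1,\nu_1})\Big(\sum_{\nu_2}S(g_{2,\nu_2})^2\Big)^{1/2}\Big(\sum_{\nu_2}|\Theta_{\nu_1+\nu_2}(D)g|^2\Big)^{1/2},
\]
then bound the middle factor by $S(|f_2|^2)^{1/2}$ and the last by $2^{jn/2}S_{2^j}(|g|^2)^{1/2}$ (as $\nu_1+\Z^n=\Z^n$), and finish by H\"older with exponents $(\infty,2,2)$ when $r=2$ — using $\|S(g_{1,\nu_1})\|_{L^\infty}\lesssim\|g_{1,\nu_1}\|_{L^\infty}\lesssim\|g_{1,\nu_1}\|_{L^2}$ — and with exponents $(2,2,\infty)$ when $r=\infty$, in either case producing $2^{k_1m_1+k_2m_2-jN}2^{jn/2}\|g_{1,\nu_1}\|_{L^2}\|f_2\|_{L^2}\|g\|_{L^r}$; summing over $\nu_1\in\Lambda$ by Schwarz yields the factor $|\Lambda|^{1/2}\|f_1\|_{L^2}$, and the spare $2^{jn/2}$ is absorbed by enlarging $N$. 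For the diagonal sum $\sum_{\mu\in\Lambda}\sum_{\nu_1+\nu_2=\mu}$ all terms share the same $\Theta_\mu(D)$. When $r=\infty$ I would apply Lemma \ref{lem-symb-est}(2), Schwarz in $\nu_1$ (noting that $\sum_{\nu_1}S(g_{1,\nu_1})S(g_{2,\mu-\nu_1})\le(\sum_{\nu_1}S(g_{1,\nu_1})^2)^{1/2}(\sum_{\nu_2}S(g_{2,\nu_2})^2)^{1/2}$ is independent of $\mu$), and Schwarz in $\mu\in\Lambda$ (producing $|\Lambda|^{1/2}$ and, via Lemma \ref{lem-dyadic-est}, the factor $2^{jn/2}S_{2^j}(|g|^2)^{1/2}$), then H\"older $(\infty,2,2)$ with $\|S_{2^j}(|g|^2)^{1/2}\|_{L^\infty}\lesssim\|g\|_{L^\infty}$ and $\|S(|f_i|^2)^{1/2}\|_{L^2}\lesssim\|f_i\|_{L^2}$. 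When $r=2$ I would instead estimate each term by $\|T_{\sigma_{j,\K,\Nu}}(g_{1,\nu_1},g_{2,\nu_2})\|_{L^2}\|\Theta_\mu(D)g\|_{L^2}$, bound the first factor by $2^{k_1m_1+k_2m_2-jN}\|g_{1,\nu_1}\|_{L^2}\|g_{2,\nu_2}\|_{L^2}$ via Lemma \ref{lem-symb-est}(2), boundedness of $S$ on $L^4$, and Bernstein, sum over $\nu_1$ by Schwarz, and then over $\mu\in\Lambda$ by Schwarz using $\sum_{\mu\in\Z^n}\|\Theta_\mu(D)g\|_{L^2}^2\lesssim 2^{jn}\|g\|_{L^2}^2$.

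The crux — and the main obstacle — is the diagonal sum, and more broadly the fact that each of these sums runs over infinitely many indices, so the triangle inequality is useless: the point is to use the output frequency localization to insert $\Theta_\mu(D)g$ and then to apply Lemma \ref{lem-dyadic-est} so as to trade $\ell^2$-sums of frequency pieces of $g$ (and of the $f_i$) for the averaging operators $S_R$, which — unlike frequency projections — are bounded on every $L^p$. Keeping the $\mu$-square function together pointwise, rather than summing its pieces in norm, is precisely what makes the endpoint $r=\infty$ go through in the diagonal sum; the case $r=2$ is handled instead by Bernstein's inequality, and the intermediate exponents are recovered by interpolation in $g$.
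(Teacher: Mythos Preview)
Your proposal is correct and shares the paper's overall architecture: localize the inputs via $\widetilde\varphi(D-\nu_i)f_i$, localize $g$ on the output side via a bump at scale $2^j$ centered at $\nu_1+\nu_2$ (using the Fourier support of $T_{\sigma_{j,\K,\Nu}}(f_1,f_2)$), apply Lemma~\ref{lem-symb-est}(2), and use Lemma~\ref{lem-dyadic-est} to convert the $\ell^2$ square functions into the averaging operators $S$, $S_{2^j}$.

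The one genuine difference is how the parameter $r$ is handled. You split into the endpoints $r=2$ and $r=\infty$, use different H\"older exponent triples at each endpoint (and Bernstein's inequality at $r=2$, plus a separate $L^2\times L^2\to L^2$ argument for the diagonal sum at $r=2$), and then interpolate in $g$. The paper instead treats all $2\le r\le\infty$ in one stroke: after reaching the pointwise factor $2^{jn/2}S_{2^j}(|g|^2)(x)^{1/2}$, it applies H\"older's inequality with $1/q+1/r=1/2$ directly inside the definition of $S_{2^j}$ to obtain the \emph{pointwise} bound
\[
\Big(\sum_{\mu\in\Z^n}|\Theta_\mu(D)g(x)|^2\Big)^{1/2}\lesssim 2^{jn(1/2+1/r)}\|g\|_{L^r},
\]
after which the remaining integral involves only $S(|f_1|^2)^{1/2}S(|f_2|^2)^{1/2}$ and is dispatched by a single Cauchy--Schwarz. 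This avoids interpolation, Bernstein, and the separate $r=2$ treatment of the diagonal sum, and explains uniformly why the constant is $2^{jn(1/2+1/r)}$ (which you then absorb as $2^{jn/2}$ at the endpoints). Your route works, but the paper's single H\"older step is shorter and makes the $r$-dependence transparent.
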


\begin{proof}
 Let $\widetilde{\varphi}$ be a Schwartz function such that 
  $\supp \widetilde{\varphi} \subset [-2, 2]^n$ and $\widetilde{\varphi}(\xi) = 1$ on $ [-1, 1]^n$. For $\nu_i \in \Z^n$, $i=1,2$, $\mu \in \Z^n$ and $j \geq 0$, we set
 \begin{align*}
  f_{i, \nu_i} = \widetilde{\varphi}(D-\nu_i)f_i,
  \quad
  g_{j, \mu} = \widetilde{\varphi}(2^{-(j+2)}(D-\mu))g.
 \end{align*}
 Then, it follows from the identity $\varphi \widetilde{\varphi} = \varphi$ that
 \begin{equation}\label{AAA}
T_{\sigma_{j, \K, \Nu}}(f_1, f_2) = T_{\sigma_{j, \K, \Nu}}( f_{1, \nu_1}, f_{2, \nu_2}) .
 \end{equation}

The Fourier transform of $T_{\sigma_{j, \K, \Nu}}(f_1, f_2)$ can be written as
\begin{align*}
 &\F[T_{\sigma_{j, \K, \Nu}}(f_1, f_2)](\zeta)\\
 &= \frac{1}{(2\pi)^{2n}}\int_{(\R^n)^2}
   \psi_{j}(\zeta- (\xi_1+\xi_2)) [\F_x\sigma](\zeta-(\xi_1+\xi_2), \xi_1, \xi_2)\\
 &\qquad\qquad\qquad \times  \psi_{k_1}(\xi_1)\psi_{k_2}(\xi_2)
   \varphi(\xi_1-\nu_1)\varphi(\xi_2-\nu_2)
   \widehat{f_1}(\xi_1) \widehat{f_2}(\xi_2)\,d\xi_1d\xi_2,
\end{align*}
where $\F_x\sigma$ denotes the partial Fourier transform
of $\sigma(x,\xi_1,\xi_2)$ with respect to  the $x$-variable.
Now, if $\xi_i \in \supp \varphi(\cdot - \nu_i)$ for
$i=1,2 $ and $\zeta - (\xi_1+\xi_2) \in \supp \psi_j $, then
$\zeta \in \nu_1 +\nu_2 + \left[-2^{j+2}, 2^{j+2} \right]^n$. This implies that
\begin{align} \label{supp-uniform}
 \supp \F[T_{\sigma_{j, \K, \Nu}}(f_1, f_2)]
 \subset \nu_1+ \nu_2 +\left[-2^{j+2}, 2^{j+2} \right]^n.
\end{align}
Hence, it follows from \eqref{AAA} that
\begin{equation}\label{trilin-rep}
\begin{split}
\la T_{\sigma_{j, \K, \Nu}}( f_1, f_2), g \ra
&=\la T_{\sigma_{j, \K, \Nu}}( f_1, f_2), g_{j,\nu_1+\nu_2} \ra
\\
&= \la T_{\sigma_{j, \K, \Nu}}( f_{1, \nu_1}, f_{2, \nu_2}), g_{j, \nu_1+\nu_2} \ra.
\end{split}
\end{equation}

First, we consider the sum $\sum_{\nu_1 \in \Lambda} \sum_{\nu_2 \in \Z^n}$.
By \eqref{trilin-rep},
Lemma \ref{lem-symb-est} (2) with $N$ replaced by $N+n/2+n/r$
and Schwarz's inequality, we have
\begin{align*}
&\sum_{\nu_1 \in \Lambda} \sum_{\nu_2 \in \Z^n} |\la T_{\sigma_{j, \K, \Nu}}( f_1, f_2), g\ra|
= \sum_{\nu_1 \in \Lambda} \sum_{\nu_2 \in \Z^n} |\la T_{\sigma_{j, \K, \Nu}}( f_{1, \nu_1}, f_{2, \nu_2}), g_{j, \nu_1+\nu_2}\ra| \\
 &\lesssim 2^{k_1m_1 + k_2m_2-j(N+n/2+n/r)}  
 \sum_{\nu_1 \in \Lambda} \sum_{\nu_2 \in \Z^n}
 \int_{\R^n} S(f_{1, \nu_1})(x) S(f_{2, \nu_2})(x) |g_{j, \nu_1+\nu_2}(x)| \, dx  \\
 &\leq 2^{k_1m_1 + k_2m_2-j(N+n/2+n/r)} \\
 &\quad \times
 \sum_{\nu_1 \in \Lambda} 
 \int_{\R^n} S(f_{1, \nu_1})(x) 
 \Big( \sum_{\nu_2 \in \Z^n} S(f_{2, \nu_2})(x)^2 \Big)^{1/2}
 \Big( \sum_{\nu_2 \in \Z^n} |g_{j, \nu_1+\nu_2}(x)|^2 \Big)^{1/2} \, dx \\
 &\leq 2^{k_1m_1 + k_2m_2-j(N+n/2+n/r)} |\Lambda|^{1/2}\\
 &\quad\times
 \int_{\R^n} 
\Big( \sum_{\nu_1 \in \Z^n} S(f_{1, \nu_1})(x)^2 \Big)^{1/2} 
\Big(  \sum_{\nu_2 \in \Z^n} S(f_{2, \nu_2})(x)^2 \Big)^{1/2} 
\Big( \sum_{\mu \in \Z^n} |g_{j, \mu}(x)|^2\Big)^{1/2} \, dx.
\end{align*}
It follows from Lemma \ref{lem-dyadic-est} that
\begin{align*}
 \Big( \sum_{\nu_1 \in \Z^n} S(f_{1, \nu_1})(x)^2 \Big)^{1/2} 
 &\lesssim \Big( \sum_{\nu_1 \in \Z^n} S(|f_{1, \nu_1}|^2)(x) \Big)^{1/2}
  =\Big( S\Big(\sum_{\nu_1 \in \Z^n} |f_{1, \nu_1}|^2 \Big)(x) \Big)^{1/2} \\
 &\lesssim S(S(|f_1|^2))(x)^{1/2} 
 \approx S(|f_1|^2)(x)^{1/2}
\end{align*}
and
\begin{align*}
\Big( \sum_{\mu \in \Z^n} |g_{j, \mu}(x)|^2\Big)^{1/2} 
&\lesssim 2^{(j+2)n/2}S_{2^{j+2}}(|g|^2)(x)^{1/2}
\\
&\lesssim 2^{jn} \Big( \int_{\R^n} \frac{1}{(1+2^{j}|x-y|)^{Lq}}\, dy \Big)^{1/q}
\Big( \int_{\R^n} |g(y)|^r \,dy \Big)^{1/r}
\\
&\approx 2^{jn(1/2+1/r)}\|g\|_{L^r},
\end{align*} 
where we used H\"older's inequality with $1/q+1/r=1/2$
in the second inequality.
Therefore, by Schwarz's inequality and Young's inequality, we obtain
\begin{align*}
&\sum_{\nu_1 \in \Lambda} \sum_{\nu_2 \in \Z^n} |\la T_{\sigma_{j, \K, \Nu}}( f_1, f_2), g \ra|\\
&\lesssim 2^{k_1m_1 + k_2m_2-jN} |\Lambda|^{1/2}
 \Big(\int_{\R^n}S(|f_1|^2)(x)^{1/2} S(|f_2|^2)(x)^{1/2} \, dx \Big)
 \|g\|_{L^r}\\
&\leq 2^{k_1m_1 + k_2m_2-jN} |\Lambda|^{1/2}
\|S(|f_1|^2)^{1/2}\|_{L^2} \|S(|f_2|^2)^{1/2}\|_{L^2}  \|g\|_{L^r}\\
&\approx 2^{k_1m_1 + k_2m_2-jN} |\Lambda|^{1/2}
\|f_1\|_{L^2} \|f_2\|_{L^2} \|g\|_{L^r}.
\end{align*}
In the same way, we can estimate the sum $\sum_{\nu_1 \in \Z^n} \sum_{\nu_2 \in \Lambda}$.

Next, we consider the sum $\sum_{\mu \in \Lambda} \sum_{\nu_1+\nu_2 = \mu}$.
By (\ref{trilin-rep}), Lemma \ref{lem-symb-est} (2) and Schwarz's inequality,
\begin{align*}
 &\sum_{\mu \in \Lambda} \sum_{\nu_1+\nu_2 = \mu} |\la T_{\sigma_{j, \K, \Nu}}( f_1, f_2), g \ra| 
 =\sum_{\mu \in \Lambda} \sum_{\nu_1+\nu_2 = \mu} |\la T_{\sigma_{j, \K, \Nu}}( f_{1, \nu_1}, f_{2, \nu_2}), g_{j, \nu_1+\nu_2}\ra|\\
 &\lesssim 2^{k_1m_1 + k_2m_2-j(N+n/2+n/r)}  
 \sum_{\mu \in \Lambda} \sum_{\nu_1 \in \Z^n}
 \int_{\R^n} S(f_{1, \nu_1})(x) S(f_{2, \mu-\nu_1})(x) |g_{j, \mu}(x)| \, dx\\
 &\leq 2^{k_1m_1 + k_2m_2-j(N+n/2+n/r)} \\
 &\quad \times 
 \sum_{\mu \in \Lambda} \int_{\R^n} 
 \Big( \sum_{\nu_1 \in \Z^n} S(f_{1, \nu_1})(x)^2 \Big)^{1/2}
 \Big( \sum_{\nu_1 \in \Z^n} S(f_{2, \mu-\nu_1})(x)^2 \Big)^{1/2}
 |g_{j, \mu}(x)| \, dx \\
 &\leq 2^{k_1m_1 + k_2m_2-j(N+n/2+n/r)} |\Lambda|^{1/2}\\
 &\times
 \int_{\R^n} 
\Big( \sum_{\nu_1 \in \Z^n} S(f_{1, \nu_1})(x)^2 \Big)^{1/2} 
\Big(  \sum_{\nu_2 \in \Z^n} S(f_{2, \nu_2})(x)^2 \Big)^{1/2} 
\Big( \sum_{\mu \in \Z^n} |g_{j, \mu}(x)|^2\Big)^{1/2} \, dx.
\end{align*}
The rest of the proof is the same as before. The proof is complete.
\end{proof}

\section{Proof of Theorem \ref{main1}}\label{section4}
Let $m_1$, $m_2$, $\sigma$ and $p$ be the same
as in Theorem \ref{main1}.
To obtain Theorem \ref{main1}, by duality, 
it is sufficient to prove that
\begin{align*}
 |\la T_\sigma(f_1, f_2), g \ra| \lesssim \|f_1\|_{L^2} \|f_2\|_{L^2} \|g\|_{B^0_{p^\prime, \infty}}.
\end{align*}
From (\ref{decomp-symb}), we can write
\begin{align*}
  &\la T_\sigma(f_1, f_2), g \ra
  = \sum_{j\ge 0} \sum_{\K \in (\N_0)^2} \sum_{\Nu \in (\Z^n)^2}
 \la T_{\sigma_{j, \K, \Nu}}(f_1, f_2), g \ra \\
  &=  \sum_{j\ge 0} 
  \sum_{k_1 \geq k_2} 
  \sum_{\Nu \in (\Z^n)^2}
  \la T_{\sigma_{j, \K, \Nu}}(f_1, f_2), g \ra
  + 
  \sum_{j\ge 0}
  \sum_{k_1 < k_2} 
  \sum_{\Nu \in (\Z^n)^2}
  \la T_{\sigma_{j, \K, \Nu}}(f_1, f_2), g \ra.
\end{align*}
By symmetry, we only consider the former sum in the last line, because the argument below works for the latter one.

Let $\widetilde{\psi}_k  \in \Sh(\R^n)$, $k \geq 0$, be such that  
\begin{align*}
&\supp \widetilde{\psi}_0 \subset \{ |\xi| \leq 4 \}, \quad
\supp \widetilde{\psi}_k \subset \{2^{k-2} \leq |\xi| \leq 2^{k+2}\},\quad k \geq 1,\\
&\widetilde{\psi}_k = 1 
\quad \text{on} \quad
\supp \psi_k, \quad k \geq 0.
\end{align*}
Since
$\psi_{k_i}\widetilde{\psi}_{k_i}=\psi_{k_i}$,
it holds that
\begin{align*}
  \la T_{\sigma_{j, \K, \Nu}}(f_1, f_2), g \ra 
  =  \la T_{\sigma_{j, \K, \Nu}}(f_{1, k_1}, f_{2, k_2}), g \ra
\end{align*}
with 
$f_{i, k_i} = \widetilde{\psi}_{k_i}(D)f_i$, $i=1,2$.
We  also use the decomposition
\begin{align*}
 g = \sum_{\ell \geq 0} \psi_\ell(D)g = \sum_{\ell \geq 0} g_\ell,
\end{align*}
where $\{\psi_\ell\}_{\ell \geq 0}$ is the same as in (\ref{Besov-parti}).
Then, we can write
\begin{align*}
\sum_{j\ge 0} 
  \sum_{k_1 \geq k_2} 
  \sum_{\Nu \in (\Z^n)^2}
  \la T_{\sigma_{j, \K, \Nu}}(f_1, f_2), g \ra
= \sum_{j\ge 0} 
  \sum_{k_1 \geq k_2} 
  \sum_{\ell \geq 0}
  \sum_{\Nu \in (\Z^n)^2}
  \la T_{\sigma_{j, \K, \Nu}}(f_{1, k_1}, f_{2, k_2}), g_\ell \ra.
 \end{align*}
Furthermore, we divide the sum as follows.
\begin{align*}
  &\sum_{j \ge 0} 
  \sum_{k_1 \geq k_2} 
  \sum_{\ell \geq 0}
  \sum_{\Nu \in (\Z^n)^2}
  \la T_{\sigma_{j, \K, \Nu}}(f_{1, k_1}, f_{2, k_2}), g_\ell \ra\\
 &= \Big( \sum_{\substack{j \geq k_1-3\\ k_1 \geq k_2}} 
     + \sum_{\substack{j < k_1-3\\ k_1 \geq k_2}} \Big)
  \sum_{\ell \geq 0}
  \sum_{\Nu \in (\Z^n)^2}
 \la T_{\sigma_{j, \K, \Nu}}(f_{1, k_1}, f_{2, k_2}), g_\ell \ra
 = A_1 + A_2.
\end{align*}

\bigskip
\noindent
{\it Estimate for $A_1$}.
Since $\supp \widehat{g_\ell} \subset \{ 2^{\ell-1} \le |\zeta| \le 2^{\ell+1}\}$, $\ell \ge 1$, and
\begin{align*}
\supp \F[ T_{\sigma_{j, \K, \Nu}}(f_{1, k_1}, f_{2, k_2})] 
 \subset \{ |\zeta| \leq 2^{j+6}\}, \quad  j\geq k_1-3, \ k_1 \geq k_2
\end{align*}
 (see the argument around (\ref{supp-uniform})),
 it follows that $\la T_{\sigma_{j, \K, \Nu}}(f_{1, k_1}, f_{2, k_2}), g_\ell \ra = 0$ if $\ell \geq j+7$.
In addition, we see that if $\supp \varphi(\cdot - \nu_2) \cap \supp 
\psi_{k_2} = \emptyset$, then $\sigma_{j, \K, \Nu} = 0$, and consequently $\la T_{\sigma_{j, \K, \Nu}}(f_{1, k_1}, f_{2, k_2}), g_\ell \ra = 0$. 
From these observations, $A_1$ can be written as
\begin{align*}
A_1
= \sum_{\substack{j \geq k_1-3 \\ \ell \leq j+6, \, k_1 \geq k_2}} 
  \sum_{\nu_1 \in \Z^n} \sum_{\nu_2 \in \Lambda_{k_2}}
 \la T_{\sigma_{j, \K, \Nu}}(f_{1, k_1}, f_{2, k_2}), g_\ell \ra,
\end{align*}
where 
\begin{align} \label{k2set}
 \Lambda_{k_2} = \{\nu_2 \in \Z^n : \supp \varphi(\cdot - \nu_2) \cap \supp \psi_{k_2} \neq \emptyset \}.
\end{align}
Note that the number of elements of $\Lambda_{k_2}$ satisfies $|\Lambda_{k_2}| \lesssim 2^{k_2n}$.
Using Lemma \ref{lem-L2L2Lr-est} with $r = p^\prime$ and $N \ge 1$, we obtain
\begin{align} \label{A1-est}
\begin{split}
 |A_1| 
 &\leq
 \sum_{\substack{j \geq k_1-3 \\ \ell \leq j+6, \, k_1 \geq k_2}} 
  \sum_{\nu_1 \in \Z^n} \sum_{\nu_2 \in \Lambda_{k_2}}
   |\la T_{\sigma_{j, \K, \Nu}}(f_{1, k_1}, f_{2, k_2}), g_\ell \ra|\\
 &\lesssim 
 \sum_{\substack{j \geq k_1-3 \\ \ell \leq j+6, \, k_1 \geq k_2}} 
  2^{k_1 m_1+k_2 m_2 -jN} |\Lambda_{k_2}|^{1/2} \|f_{1, k_1}\|_{L^2}\|f_{2, k_2}\|_{L^2} \|g_\ell\|_{L^{p^\prime}}\\
 &\lesssim
 \sum_{\substack{j \geq k_1-3 \\ \ell \leq j+6, \, k_1 \geq k_2}} 
  2^{k_1 m_1+k_2 m_2+k_2n/2 -jN}\|f_{1, k_1}\|_{L^2}\|f_{2, k_2}\|_{L^2} \|g_\ell\|_{L^{p^\prime}}.
\end{split}
\end{align}
By our assumptions $m_1, m_2 < 0$ and $m_1+m_2 = -n/2$, 
it follows that $k_1 m_1+k_2 m_2+k_2n/2 \leq k_2 m_1+k_2 m_2+k_2n/2 = 0$ for $k_1 \geq k_2$. 
Hence, since $\sum_{k_i \ge 0} |\widetilde{\psi}_{k_i}(\xi_i)|^2 \lesssim 1$, the last quantity in (\ref{A1-est}) is estimated by
\begin{align*}
 &\sum_{\substack{j \geq k_1-3 \\ \ell \leq j+6, \, k_1 \geq k_2}} 
  2^{-jN}\|f_{1, k_1}\|_{L^2}\|f_{2, k_2}\|_{L^2} \|g_\ell\|_{L^{p^\prime}}\\
 &\leq 
 \sum_{\substack{j \geq k_1-3 \\  k_1 \geq k_2}} 
 2^{-jN} (j+7) \|f_{1, k_1}\|_{L^2}\|f_{2, k_2}\|_{L^2} 
 \big(\sup_{\ell \ge 0}\|g_\ell\|_{L^{p^\prime}} \big)\\
 &\leq
 \sum_{j \geq 0} 
 2^{-jN} (j+7) (j+4) 
  \Big( \sum_{k_1 \geq 0} \|f_{1, k_1}\|^2_{L^2} \Big)^{1/2}
  \Big( \sum_{k_2 \geq 0} \|f_{2, k_2}\|^2_{L^2} \Big)^{1/2}
  \|g\|_{B^0_{p^\prime, \infty}} \\
 &\lesssim \|f_1\|_{L^2} \|f_2\|_{L^2} \|g\|_{B^0_{p^\prime, \infty}},
\end{align*}
which gives the desired result.

\bigskip
\noindent
{\it Estimate for $A_2$}.
 We divide $A_2$ as follows.
\begin{align*}
A_2
 &= \Big( \sum_{k_1-3 \leq k_2 \leq k_1} + \sum_{k_2 < k_1-3}  \Big)
  \sum_{j < k_1-3}
  \sum_{\ell \geq 0}
  \sum_{\Nu \in (\Z^n)^2}
 \la T_{\sigma_{j, \K, \Nu}}(f_{1, k_1}, f_{2, k_2}), g_\ell \ra \\
 &= A_{2,1} +A_{2,2}.
\end{align*}

We first consider the estimate for  $A_{2,1}$. Since
\begin{align*}
 \supp \F[ T_{\sigma_{j, \K, \Nu}}(f_{1, k_1}, f_{2, k_2})] 
 \subset \{ |\zeta| \leq 2^{k_1+3}\}, \quad j < k_1-3, \ k_2 \le k_1,
\end{align*}
it follows that $\la T_{\sigma_{j, \K, \Nu}}(f_{1, k_1}, f_{2, k_2}), g_\ell \ra = 0$
if $\ell \geq k_1+4$.
Furthermore, by (\ref{supp-uniform}) and
the fact $\supp \widehat{g_\ell} \subset \supp \psi_\ell$,
we see that if
$\left(\nu_1+\nu_2 + \left[-2^{j+2}, 2^{j+2} \right]^n \right) \cap \supp \psi_\ell = \emptyset$,
then $\la T_{\sigma_{j, \K, \Nu}}(f_{1, k_1}, f_{2, k_2}), g_\ell \ra = 0$. 
Combining these observations,  we see that $A_{2,1}$ can be written as
\begin{align*}
 A_{2,1} 
   =\sum_{\substack{j < k_1-3, \, \ell \le k_1+3 \\ k_1-3 \leq k_2 \leq k_1}}
    \sum_{\mu \in \Lambda_{j, \ell}}
    \sum_{\nu_1 + \nu_2 = \mu}
 \la T_{\sigma_{j, \K, \Nu}}(f_{1, k_1}, f_{2, k_2}), g_\ell \ra,
\end{align*}
where
\begin{align*}
\Lambda_{j, \ell} = \{ \mu \in \Z^n : (\mu + \left[-2^{j+2}, 2^{j+2}\right]^n ) \cap \supp \psi_\ell \neq \emptyset \}.
\end{align*}
The number of elements of $\Lambda_{j, \ell}$ can be estimated by $ |\Lambda_{j, \ell}| \lesssim 2^{(j+\ell)n}$. 
Hence, it follows from Lemma \ref{lem-L2L2Lr-est} with $r = p^\prime$ and $N > n/2$ that
\begin{align*}
 |A_{2,1}| 
 &\leq \sum_{\substack{j < k_1-3, \, \ell \le k_1+3 \\ k_1-3 \leq k_2 \leq k_1}}
    \sum_{\mu \in \Lambda_{j, \ell}}
    \sum_{\nu_1 + \nu_2 = \mu}
 |\la T_{\sigma_{j, \K, \Nu}}(f_{1, k_1}, f_{2, k_2}), g_\ell \ra|\\
 &\lesssim \sum_{\substack{j < k_1-3, \, \ell \le k_1+3 \\ k_1-3 \leq k_2 \leq k_1}}
    2^{k_1m_1+k_2m_2-jN}  |\Lambda_{j, \ell}|^{1/2} 
    \|f_{1, k_1}\|_{L^2}\|f_{2, k_2}\|_{L^2} \|g_\ell\|_{L^{p^\prime}}\\
 &\lesssim \sum_{\substack{j < k_1-3, \, \ell \le k_1+3 \\ k_1-3 \leq k_2 \leq k_1}}
    2^{k_1m_1+k_2m_2-jN} 2^{(j+\ell)n/2}  
    \|f_{1, k_1}\|_{L^2}\|f_{2, k_2}\|_{L^2} \|g\|_{B^0_{p^\prime, \infty}}\\
 &\approx \sum_{ k_1-3 \leq k_2 \leq k_1}
    2^{k_1m_1+k_2m_2} 2^{k_1n/2}  
    \|f_{1, k_1}\|_{L^2}\|f_{2, k_2}\|_{L^2} \|g\|_{B^0_{p^\prime, \infty}}\\
 &= \sum_{ k_1-3 \leq k_2 \leq k_1}
    2^{-k_1m_2+k_2m_2} 
    \|f_{1, k_1}\|_{L^2}\|f_{2, k_2}\|_{L^2} \|g\|_{B^0_{p^\prime, \infty}}.
\end{align*}
Since we can write $k_2 = k_1 + \widetilde{k}$ with $\widetilde{k} = -3, -2, -1, 0$ if $k_1-3 \leq k_2 \leq k_1$, the last sum can be written as
\begin{align*}
 &\sum_{ k_1 \ge 0} \sum_{ -3 \le \widetilde{k} \le 0}
    2^{-k_1m_2+(k_1+\widetilde{k})m_2} 
    \|f_{1, k_1}\|_{L^2}\|f_{2, k_1+\widetilde{k}}\|_{L^2}\\
 &= \sum_{ k_1 \ge 0} \sum_{ -3 \le \widetilde{k} \le 0}
  2^{\widetilde{k}m_2} \|f_{1, k_1}\|_{L^2}\|f_{2, k_1+\widetilde{k}}\|_{L^2}.
\end{align*}
Hence, by Schwarz's inequality, the right hand side of the above is estimated by
\begin{align*}
\sum_{ -3 \le \widetilde{k} \le 0} 2^{\widetilde{k}m_2} 
\Big( \sum_{k_1\ge 0} \|f_{1, k_1}\|^2_{L^2} \Big)^{1/2}
\Big( \sum_{k_1\ge 0} \|f_{2, k_1+\widetilde{k}}\|^2_{L^2} \Big)^{1/2}
\lesssim \|f_1\|_{L^2}\|f_2\|_{L^2},
\end{align*}
which implies the desired estimate.

Next, we consider the estimate for $A_{2, 2}$.
Since
\begin{align*}
  \supp \F[ T_{\sigma_{j, \K, \Nu}}(f_{1, k_1}, f_{2, k_2})] 
 \subset \{2^{k_1-2} \le |\zeta| \le 2^{k_1+2}\},
\quad  j < k_1-3, \ k_2 < k_1-3,
\end{align*}it follows that
$\la T_{\sigma_{j, \K, \Nu}}(f_{1, k_1}, f_{2, k_2}), g_\ell \ra = 0$
if $\ell \le k_1-3$ or $k_1+3 \leq \ell$. 
As before, if $\supp \varphi(\cdot - \nu_2) \cap \supp \psi_{k_2} = \emptyset$, then $\la T_{\sigma_{j, \K, \Nu}}(f_{1, k_1}, f_{2, k_2}), g_\ell \ra = 0$. 
Therefore,  we obtain
\begin{align*}
 A_{2, 2} 
 = \sum_{\substack{j < k_1-3 \\ k_2 < k_1-3, \, |\ell-k_1| \le 2}}
    \sum_{\nu_1 \in \Z^n} \sum_{\nu_2 \in  \Lambda_{k_2}}
 \la T_{\sigma_{j, \K, \Nu}}(f_{1, k_1}, f_{2, k_2}), g_\ell \ra,
\end{align*}
where $\Lambda_{k_2}$ is the same as in (\ref{k2set}).
By applying Lemma \ref{lem-L2L2Lr-est}, it follows that
\begin{align} \label{A22-est}
\begin{split}
|A_{2, 2}|
&\leq \sum_{\substack{j < k_1-3 \\ k_2 < k_1-3, \, |\ell-k_1| \le 2}}
    \sum_{\nu_1 \in \Z^n} \sum_{\nu_2 \in  \Lambda_{k_2}}
 |\la T_{\sigma_{j, \K, \Nu}}(f_{1, k_1}, f_{2, k_2}), g_\ell \ra|\\
&\lesssim \sum_{\substack{j < k_1-3 \\ k_2 < k_1-3, \, |\ell-k_1| \le 2}}
 2^{k_1m_1+k_2m_2-jN} 2^{k_2n/2} \|f_{1, k_1}\|_{L^2} \|f_{2, k_2}\|_{L^2} \|g_{\ell}\|_{L^{p^\prime}}\\
&\lesssim \sum_{k_2 < k_1-3}
 2^{m_1(k_1-k_2)} \|f_{1, k_1}\|_{L^2} \|f_{2, k_2}\|_{L^2} \|g\|_{B^0_{p^\prime, \infty}}\\
&\le \sum_{k_1, k_2 \ge 0}
 2^{m_1|k_1-k_2|} \|f_{1, k_1}\|_{L^2} \|f_{2, k_2}\|_{L^2} \|g\|_{B^0_{p^\prime, \infty}}.
 \end{split}
\end{align} 
Here, it follows from our assumption $m_1 < 0$ that
\begin{align*}
 \sup_{k_1 \ge 0} \sum_{k_2 \ge 0} 2^{m_1|k_1-k_2|} < \infty
 \quad
\text{and}
\quad
 \sup_{k_2 \ge 0} \sum_{k_1 \ge 0} 2^{m_1|k_1-k_2|} < \infty.
\end{align*}
Hence, by Lemma \ref{Schur's lemma},
the last quantity in (\ref{A22-est}) can be estimated by
\begin{align*}
 \Big( \sum_{k_1 \ge 0} \|f_{1, k_1}\|^2_{L^2} \Big)^{1/2}
  \Big( \sum_{k_2 \ge 0} \|f_{2, k_2}\|^2_{L^2} \Big)^{1/2}
  \|g\|_{B^0_{p^\prime, \infty}}
\lesssim \|f_1\|_{L^2} \|f_2\|_{L^2} \|g\|_{B^0_{p^\prime, \infty}}.
\end{align*}
This completes the proof of Theorem \ref{main1}.

\section{Proof of Theorem \ref{main2}}\label{section5}

In this section, we shall prove Theorem \ref{main2}. The ``if'' part follows from Theorem \ref{main1}, (\ref{two-classes}) and the embedding
$B^0_{p, 1} \hookrightarrow B^0_{p, q}$, 
$1\le q \le \infty$. Thus, let us consider the ``only if'' part.

We assume that the $L^2\times L^2 \to B^0_{p, q}$ boundedness of all $T_\sigma$ with $\sigma \in BS^{-n/2}_{0, 0}$ holds throughout the rest of this section.
We  also take a function $\psi_0 \in \Sh(\R^n)$ satisfying $\psi_0 = 1$ on $\{|\xi| \le 2^{1/4}\}$ and $\supp \psi_0 \subset \{|\xi| \le 2^{3/4}\}$, and set $\psi_\ell (\xi) = \psi_0(2^{-\ell}\xi) - \psi_0(2^{-\ell+1}\xi)$, $\ell \ge 1$. Then $\{\psi_\ell\}_{\ell \ge 0}$ satisfies (\ref{Besov-parti}) with the support conditions replaced by
\begin{align}\label{Besov-parti-2-1}
\begin{split}
 \supp \psi_0 \subset \{ |\xi| \leq 2^{3/4}\}, \quad
\supp \psi_\ell \subset \{2^{\ell -3/4} \leq |\xi| \leq 2^{\ell + 3/4}\}, \quad \ell \ge 1.
\end{split}
\end{align}
Combining this with the condition $\sum_{\ell \ge 0}\psi_{\ell}=1$,
we see that
\begin{equation}\label{Besov-parti-2-2}
\psi_0 = 1
\ \ \text{on} \ \
\{|\xi| \leq 2^{1/4}\},
\quad
\psi_\ell  = 1
\ \ \text{on} \ \
\{2^{\ell -1/4} \leq |\xi| \leq 2^{\ell + 1/4}\},
\ \ \ell \geq 1,
\end{equation}
and use this $\{\psi_\ell\}_{\ell \ge 0}$
as the partition of unity in the definition of Besov spaces. 

\bigskip
\noindent
{\it The necessity of the condition $1 \le p \le 2$}.
First, we shall prove that $p \ge 1$.
Let $\sigma \in \Sh((\R^n)^2)$ satisfy $\sigma(\xi_1, \xi_2) = 1$ on $|(\xi_1, \xi_2)| \leq 1$,
and obviously $\sigma \in BS^{-n/2}_{0, 0}$.
For $\varphi \in \Sh(\R^n)$ satisfying
$\supp \varphi \subset \{ |\xi| \leq 1\}$,
we set
\[
\widehat{f_{i, j}}(\xi_i) = 2^{jn/2}\varphi(2^j\xi_i),
\quad i=1,2, \ j \ge 1,
\]
and note that
 $\|f_{i, j}\|_{L^2} \approx 1$. 
 Since $|(\xi_1, \xi_2)| \leq 2^{-j+1/2} \leq 1$ for $|\xi_1|, |\xi_2| \leq 2^{-j}$ and $j \ge 1$, it follows that $\sigma(\xi_1, \xi_2) \varphi(2^j\xi_1) \varphi(2^j\xi_2) = \varphi(2^j\xi_1) \varphi(2^j\xi_2)$, and consequently 
\[
T_{\sigma}(f_{1, j}, f_{2, j})(x) 
= 2^{-jn}([\F^{-1}\varphi](2^{-j}x))^2.
\]
By a change of variables, $\|T_{\sigma}(f_{1, j}, f_{2, j})\|_{L^p} \approx 2^{jn(1/p-1)}$. 
Furthermore, we have
\begin{align*}
\begin{split}
 \supp \F[T_\sigma(f_{1, j}, f_{2, j})] 
 = \supp [\varphi(2^j \cdot) * \varphi(2^j \cdot)] 
 \subset \{ |\zeta| \leq 1\}.
\end{split}
\end{align*} 
Hence, it follows from (\ref{Besov-parti-2-1}) and (\ref{Besov-parti-2-2}) that 
$\psi_\ell(D)T_\sigma(f_{1, j}, f_{2, j})$ equals $T_\sigma(f_{1, j}, f_{2, j})$ if $\ell=0$, and $0$ otherwise.
From this, we obtain
\begin{align*}
 \|T_\sigma(f_{1, j}, f_{2, j})\|_{B^0_{p, q}} 
 = \|T_\sigma(f_{1, j}, f_{2, j})\|_{L^p} \approx 2^{jn(1/p-1)}.
\end{align*}
Therefore, by our assumption, the boundedness of $T_\sigma$ gives 
\begin{align*}
2^{jn(1/p-1)} \approx  \|T_\sigma(f_{1, j}, f_{2, j})\|_{B^0_{p, q}} \lesssim  \|f_{1, j}\|_{L^2} \|f_{2, j}\|_{L^2} \approx 1.
\end{align*} 
The arbitrariness of $j \ge 1$ implies that $1/p-1 \le 0$, namely $p\ge 1$.

Next we shall prove that $p \leq 2$. Our argument is based on the proof of \cite[Proposition 5.1]{KMT}. 
Let $\psi, \varphi, \widetilde{\psi}, \widetilde{\varphi} \in \Sh(\R^n)$ be such that
\begin{align*}
&\supp \psi \subset \{ 2^{-1/8} \leq |\xi_1| \leq 2^{1/8} \}, \quad
\supp \varphi \subset \{|\xi_2| \leq 2^{-j_0}\},\\
&\supp \widetilde{\psi} \subset \{ 2^{-1/4} \leq |\xi_1| \leq 2^{1/4} \},\quad \widetilde{\psi} = 1 \quad \text{on} \quad \supp \psi, \\
&\supp \widetilde{\varphi} \subset \{ |\xi_2| \leq 2^{-j_0+1}\}, \quad 
\widetilde{\varphi} = 1 \quad \text{on} \quad \supp \varphi,
\end{align*}
where $j_0$ is a positive integer satisfying
\begin{equation}\label{proof_p2}
2^{-1/4} \le 2^{-1/8} -2^{-j_0}
\quad \text{and} \quad
2^{1/8} + 2^{-j_0} \le 2^{1/4}.
\end{equation}
We set
\begin{align*}
&\sigma(\xi_1, \xi_2) =
\sum_{k \ge 1} 2^{-kn/2} \widetilde{\psi}(2^{-k}\xi_1) \widetilde{\varphi}(2^{-k}\xi_2),\\
 &\widehat{f_{1, j}}(\xi_1) = 2^{-jn/2}\psi(2^{-j}\xi_1), \quad  \widehat{f_{2, j}}(\xi_2) = 2^{-jn/2} \varphi(2^{-j}\xi_2), 
 \quad j\ge1.
\end{align*} 
It is not  difficult to show that $\sigma \in BS^{-n/2}_{0, 0}$
(since $1+|\xi_1|+|\xi_2| \approx 2^k$
for $\xi_1 \in \mathrm{supp}\, \widetilde{\psi}(2^{-k}\cdot)$
and $\xi_2 \in \mathrm{supp}\, \widetilde{\varphi}(2^{-k}\cdot)$) and
$\|f_{1, j}\|_{L^2}, \, \|f_{2, j}\|_{L^2} \approx 1$.
Now, since  $\widetilde{\psi}(2^{-k}\xi_1) \widetilde{\varphi}(2^{-k}\xi_2) \psi(2^{-j}\xi_1) \varphi(2^{-j}\xi_2)$ equals $\psi(2^{-j}\xi_1) \varphi(2^{-j}\xi_2)$ if $k=j$,
and $0$ otherwise, it follows that
\[
T_{\sigma}(f_{1, j}, f_{2, j})(x) 
=2^{jn/2} [\F^{-1}\psi](2^j x) [\F^{-1}\varphi](2^j x).
\]
From this, we have $ \| T_\sigma(f_{1, j}, f_{2, j})\|_{L^p}  \approx 2^{jn(1/2 -1/p)}$.
Moreover, by the support conditions
of $\psi, \varphi$ and \eqref{proof_p2},
we see that
$2^{j-1/4} \le |\xi_1+\xi_2| \le 2^{j+1/4}$
for $\xi_1 \in \supp \psi(2^{-j}\cdot)$
and $\xi_2 \in \supp \varphi(2^{-j}\cdot)$,
and consequently
\begin{align*}
\supp \F[T_\sigma(f_{1, j}, f_{2, j})] 
= \supp[\psi(2^{-j}\cdot)*\varphi(2^{-j}\cdot)] 
\subset \{2^{j-1/4} \leq |\zeta| \leq 2^{j+1/4}\}.
\end{align*}
Hence,  it follows from (\ref{Besov-parti-2-1}) and (\ref{Besov-parti-2-2}) that 
$\psi_\ell(D)T_\sigma(f_{1, j}, f_{2, j})$ equals
$T_\sigma(f_{1, j}, f_{2, j})$ if $\ell=j$, and $0$ otherwise.
This yields that
\begin{align*}
 \|T_\sigma(f_{1, j}, f_{2, j})\|_{B^0_{p, q}} 
 = \|T_\sigma(f_{1, j}, f_{2, j})\|_{L^p} \approx  2^{jn(1/2 -1/p)}.
\end{align*}
Therefore, by our assumption, the boundedness of $T_\sigma$ gives
\begin{align*}
 2^{jn(1/2 -1/p)} \approx \|T_\sigma(f_{1, j}, f_{2, j})\|_{B^0_{p, q}} \lesssim  \|f_{1, j}\|_{L^2} \|f_{2, j}\|_{L^2} \approx 1.
\end{align*}
The arbitrariness of $j \ge 1$ implies that $1/2-1/p \le 0$, namely $p\le 2$.

\bigskip
\noindent
{\it The necessity of the condition $1 \le q \le \infty$}.
Finally, we shall prove that $1\leq q \leq \infty$. 
The basic idea of this part goes back to
the proofs of \cite[Lemma 6.3]{MT-IUMJ} and \cite[Theorem 1.4]{Park}.
Since there is nothing to prove if $q= \infty$, we may assume that $0<q<\infty$.
By the closed graph theorem, 
our assumption implies that
there exists a positive integer $N$ such that
\begin{equation}\label{inequality_operator_norm}
\|T_{\sigma}\|_{L^2 \times L^2 \to B_{p,q}^0}
\lesssim \max_{|\alpha|, |\beta_1|, |\beta_2| \le N}
\|
(1+|\xi_1|+|\xi_2|)^{n/2}
\partial_x^{\alpha}\partial_{\xi_1}^{\beta_1}\partial_{\xi_2}^{\beta_2}
\sigma(x,\xi_1,\xi_2)\|_{L^{\infty}_{x,\xi_1,\xi_2}}
\end{equation}
for all $\sigma \in BS^{-n/2}_{0,0}$,
where $\|T_{\sigma}\|_{L^2 \times L^2 \to B_{p,q}^0}$
denotes the operator norm of $T_{\sigma}$
(see \cite[Lemma 2.6]{BBMNT}).

We use functions
$\varphi, \widetilde{\varphi} \in \Sh(\R^n)$ such that
\begin{align*}
&\supp \varphi \subset \left[-1/4, 1/4\right]^n, \quad |\F^{-1}\varphi| \geq 1
\quad \text{on} \quad [-1, 1]^n,
\\
&\supp \widetilde{\varphi} \subset \left[-1/2, 1/2 \right]^n,
\quad \widetilde{\varphi} = 1 \quad \text{on} \quad \supp \varphi.
\end{align*}
Let $\{r_{\nu}(\omega)\}_{\nu \in \Z^n}$,
$ \omega \in [0,1]^n$,
be a sequence of Rademacher functions
(see \cite[Appendix C]{Grafakos-Classical})
enumerated in such a way that
their index set is $\Z^n$.
For $\omega \in [0,1]^n$ and $\epsilon>0$,
we set
\begin{align*}
&\sigma_{\omega}(\xi_1,\xi_2)
=\sum_{k \ge k_0}\sum_{\nu \in \Lambda_k}\sum_{\nu_1+\nu_2=\nu}
r_{\nu}(\omega)(1+|\nu_1|+|\nu_2|)^{-n/2}
\widetilde{\varphi}(\xi_1-\nu_1)\widetilde{\varphi}(\xi_2-\nu_2),
\\
&\widehat{f_i}(\xi_i)
=\sum_{\mu_i \in \Z^n}\langle \mu_i \rangle^{-n/2}
(\log \langle \mu_i \rangle)^{-(1+\epsilon)/2}\varphi(\xi_i-\mu_i),
\quad i=1,2,
\end{align*}
where $\langle \mu_i \rangle=e+|\mu_i|$,
$k_0$ is a positive integer satisfying
\begin{equation}\label{proof_k0}
2^{k-1/4} \le 2^{k-1/8}-\sqrt{n}/2,
\quad 2^{k+1/8}+\sqrt{n}/2 \le 2^{k+1/4},
\quad k \ge k_0 ,
\end{equation}
and
\[
\Lambda_k=\{\nu \in \Z^n \,:\, 2^{k-1/8} \le |\nu| \le 2^{k+1/8}\}.
\]
The number of elements of $\Lambda_k$ can be estimated by $|\Lambda_k| \approx 2^{kn}$.
It is easy to check that
\begin{equation}\label{inequality_symbol}
\max_{|\alpha|, |\beta_1|, |\beta_2| \le N}
\|
(1+|\xi_1|+|\xi_2|)^{n/2}
\partial_x^{\alpha}\partial_{\xi_1}^{\beta_1}\partial_{\xi_2}^{\beta_2}
\sigma_{\omega}(x,\xi_1,\xi_2)\|_{L^{\infty}_{x,\xi_1,\xi_2}}
\lesssim 1,
\end{equation}
but it should be emphasized that
the implicit constant in this inequality is independent of $\omega \in [0,1]^n$.
Using the facts that
$\{\langle \mu_i \rangle^{-n/2}
(\log \langle \mu_i \rangle)^{-(1+\epsilon)/2}\}$
belongs to $\ell^2(\Z^n)$
and the supports of $\varphi(\cdot-\mu_i)$, $\mu_i \in \Z^n$,
are mutually disjoint,
we also see that
$\|f_i\|_{L^2} \lesssim 1$, $i=1,2$.
Thus,
by \eqref{inequality_operator_norm} and \eqref{inequality_symbol}, 
\begin{align} \label{norm-estimate}
\|T_{\sigma_\omega}(f_1, f_2)\|_{B^0_{p, q}}
\lesssim  \|f_1\|_{L^2}\|f_2\|_{L^2}
\lesssim 1,
\quad \omega \in [0,1]^n.
\end{align}

Now, since  $ \varphi(\xi_i- \nu_i)\widetilde{\varphi}(\xi_i - \mu_i)$ is equal to  $\varphi(\xi_i- \nu_i)$ if $\nu_i=\mu_i$, and $0$ otherwise, $i= 1, 2$, 
it follows that
\begin{align*}
T_{\sigma_\omega}(f_1, f_2)(x) 
&= \sum_{k \ge k_0} \sum_{\nu \in \Lambda_k} \sum_{\nu_1+\nu_2 =\nu}
r_{\nu}(\omega)  (1+|\nu_1|+|\nu_2|)^{-n/2} 
e^{i(\nu_1+\nu_2) \cdot x} (\F^{-1}\varphi(x))^2
\\
&\qquad \times
\la \nu_1 \ra^{-n/2} ( \log \la \nu_1\ra )^{-(1+\epsilon)/2}
\la \nu_2 \ra^{-n/2} ( \log \la \nu_2\ra )^{-(1+\epsilon)/2}
\\
&=\sum_{k \ge k_0}\sum_{\nu \in \Lambda_k}
r_{\nu}(\omega) d_{\nu} e^{i \nu \cdot x} \Phi(x)^2
\end{align*}
with $\Phi=\F^{-1}\varphi$ and
\begin{align*}
d_{\nu}
&=\sum_{\mu \in \Z^n}
 (1+|\nu-\mu|+|\mu|)^{-n/2}
\\
&\qquad \qquad \times
\la \nu-\mu \ra^{-n/2} ( \log \la \nu-\mu \ra )^{-(1+\epsilon)/2}
\la \mu \ra^{-n/2} ( \log \la \mu \ra )^{-(1+\epsilon)/2} .
\end{align*}
We note that
$\mathrm{supp}\, \varphi*\varphi \subset [-1/2,1/2]^n$ and
\[
\F[T_{\sigma_\omega}(f_1, f_2)](\zeta)
=(2\pi)^{-n}
\sum_{k \ge k_0}\sum_{\nu \in \Lambda_k}
r_{\nu}(\omega) d_{\nu} [\varphi*\varphi](\zeta-\nu) .
\]
By \eqref{proof_k0},
if $\zeta - \nu \in [-1/2, 1/2]^n$, $\nu \in \Lambda_k$ and $k \ge k_0$,
then $2^{k-1/4} \leq |\zeta| \leq 2^{k+1/4}$.
Hence, for $k \ge k_0$ and $\nu \in \Lambda_k$,
it follows from (\ref{Besov-parti-2-1})
and (\ref{Besov-parti-2-2}) that
$\psi_\ell(\zeta) [\varphi* \varphi] (\zeta-\nu)$
is equal to $[\varphi* \varphi] (\zeta-\nu)$ 
if $k=\ell$, and $0$ otherwise.
This yields that
\[
\psi_\ell(D)T_{\sigma_\omega}(f_1, f_2)(x) 
= \sum_{\nu \in \Lambda_\ell}
r_{\nu}(\omega) d_{\nu} e^{i \nu \cdot x} \Phi(x)^2,
\quad \ell \ge k_0.
\]

Raising \eqref{norm-estimate} to the $q$-th power
and integrating over $\omega \in [0,1]^n$,
we have
\begin{align*}
\begin{split}
1 
&\gtrsim
\int_{[0,1]^n} \|T_{\sigma_\omega}(f_1, f_2)\|_{B^0_{p, q}}^q\, d\omega
=\sum_{\ell \ge 0} \int_{[0, 1]^n}  \|\psi_\ell(D)T_{\sigma_\omega}(f_1, f_2)
\|_{L^p}^q \,d\omega
\\
&\ge
\sum_{\ell \ge k_0}
\left( \int_{\R^n}
\bigg( \int_{[0, 1]^n}  
\Big| \sum_{\nu \in \Lambda_\ell}  r_{\nu}(\omega)
d_\nu e^{i\nu \cdot x} \Phi(x)^2 \Big|^{\min\{p, q\}}
d\omega \bigg)^{p/\min\{p, q\}}
 dx \right)^{q/p}
\\
&\ge
\sum_{\ell \ge k_0}
\left( \int_{[-1,1]^n}
\bigg( \int_{[0, 1]^n}  
\Big| \sum_{\nu \in \Lambda_\ell}  r_{\nu}(\omega)
d_\nu e^{i\nu \cdot x} \Big|^{\min\{p, q\}}
d\omega \bigg)^{p/\min\{p, q\}}
dx \right)^{q/p} ,
\end{split}
\end{align*}
where
we used H\"older's inequality when $p \leq q$,
or Minkowski's inequality when $p\geq q$
in the second inequality,
and the condition $|\Phi|=|\F^{-1}\varphi| \ge 1$
on $[-1,1]^n$ in the third inequality.
Furthermore,
by Khintchine's inequality
(see, e.g., \cite[Appendix C]{Grafakos-Classical}),
the last quantity is equivalent to
\begin{align*} 
\sum_{\ell \ge k_0}
\left( \int_{[-1, 1]^n}
\Big( \sum_{\nu \in \Lambda_{\ell}} 
|d_\nu e^{i\nu \cdot x}|^2 \Big)^{p/2} \,dx \right)^{q/p} 
=2^{nq/p}\sum_{\ell \ge k_0} \Big( \sum_{\nu \in \Lambda_\ell}   |d_\nu|^2 \Big)^{q/2} .
\end{align*}
Since $|\nu-\mu| \approx |\nu|$
for $|\mu| \le |\nu|/2$, we have
\begin{align*}
|d_\nu| 
&\gtrsim \sum_{ |\mu| \le |\nu|/2 }  \la \nu \ra^{-3n/2}  
(\log\la \nu \ra)^{-(1+\epsilon)}
\approx \la \nu \ra^{-n/2}  (\log\la \nu \ra)^{-(1+\epsilon)}
\approx 2^{-\ell n/2}\ell^{-(1+\epsilon)}
\end{align*}
for $\nu \in \Lambda_\ell$,
and consequently
\begin{align*} 
 \Big( \sum_{\nu \in \Lambda_{\ell}}   |d_\nu |^2 \Big)^{q/2} 
 \gtrsim \Big( \sum_{\nu \in \Lambda_{\ell}} 2^{-\ell n} \ell^{-2(1+\epsilon)} \Big)^{q/2}
 \approx \ell^{-q(1+\epsilon)}.
\end{align*}
Combining the above estimates, we obtain
\[
\sum_{\ell \ge k_0} \ell^{-q(1+\epsilon)} \lesssim 1,
\]
which is possible only when $q(1+ \epsilon) > 1$. 
Therefore, the arbitrariness of $\epsilon > 0$ implies that $q \ge 1$.
The proof of the ``only if'' part of Theorem \ref{main2} is complete.

\section{The amalgam space $(L^2,\ell^1)$ and Wiener amalgam space $W_{1,2}$}
\label{section6}
In this section,
we shall prove Proposition \ref{main3}.
Let $1 \le p \le 2$.
It is easy to see that the embedding $B_{p,1}^0 \hookrightarrow (L^2,\ell^1)$
does not hold.
In fact, it follows from the embedding
$(L^2,\ell^1) \hookrightarrow L^q$, $1 \le q \le 2$,
(see Section \ref{section2}) that
if $B_{p,1}^0 \hookrightarrow (L^2,\ell^1)$,
then $B_{p,1}^0 \hookrightarrow L^q$ for all $1 \le q \le 2$.
However, since  $B_{p,1}^0 \hookrightarrow L^q$
if and only if $p=q$ (see, e.g., \cite[Corollary 2.2.4/2]{RS}),
this is  a contradiction.
Though we can directly prove that
the embedding $(L^2,\ell^1) \hookrightarrow B_{p,1}^0$
does not hold,
we shall show it by using the characterization of $(L^2,\ell^1)$,
Proposition \ref{amalgam-Wiener} below.
To do this, we recall the definitions
of modulation spaces and Wiener amalgam spaces
based on Feichtinger \cite{Feichtinger} and Gr\"ochenig \cite{Grochenig}.

Fix a function $\phi \in \Sh(\R^n)\setminus \{ 0 \}$
(called the window function).
Then the short-time Fourier transform $V_{\phi}f$ of
$f \in \Sh'(\R^n)$ with respect to $\phi$
is defined by
\[
V_{\phi}f(x,\xi)
=\int_{\R^n}f(t) \overline{\phi(t-x)} e^{-it\cdot \xi}\, dt  ,
\quad  x, \xi \in \R^n,
\]
where the integral is understood in the distributional sense.
For $1\le p,q \le \infty$,
the modulation space $M_{p,q}(\R^n)$
consists of all $f \in \Sh'(\R^n)$
such that
\[
\|f\|_{M_{p,q}}
=\left\{ \int_{\R^n} \left(
\int_{\R^n} |V_{\phi}f(x,\xi)|^{p}\, dx
\right)^{q/p} d\xi \right\}^{1/q}
< \infty
\]
with usual modification when $p=\infty$ or $q=\infty$.
On the other hand,
the Wiener amalgam space $W_{p,q}(\R^n)$ consists of
all $f \in \Sh'(\R^n)$ such that
\[
\|f\|_{W_{p,q}}
=\left\{ \int_{\R^n} \left(
\int_{\R^n} |V_{\phi}f(x,\xi)|^{q}\, d\xi
\right)^{p/q} dx \right\}^{1/p}
<\infty .
\]
These definitions are independent
of the choice of the window function
$\phi \in \Sh(\R^n)\setminus \{ 0 \}$, that is, different window functions yield equivalent norms.
Since $|V_{\phi}f(x,\xi)|=(2\pi)^{n}|V_{\F^{-1}\phi}[\F^{-1}f](-\xi,x)|$,
we see that
\begin{equation}\label{MW-Fourier}
\|f\|_{W_{p,q}} \approx \|\F^{-1}f\|_{M_{q,p}}.
\end{equation}
It is also known that
if $\varphi \in \Sh(\R^n)$
has a compact support and
if there exists a constant $C>0$ such that
$\left|\sum_{\nu \in \Z^n}\varphi(\xi-\nu)\right| \ge C$
for all $\xi \in \R^n$,
then
\begin{equation}\label{MW-equi}
\begin{split}
&\|f\|_{M_{p,q}} \approx
\bigg( \sum_{\nu \in \Z^n}\|\varphi(D-\nu)f\|_{L^p}^q\bigg)^{1/q},
\\
&\|f\|_{W_{p,q}} \approx
\bigg\|\bigg(\sum_{\nu \in \Z^n}|\varphi(D-\nu)f|^q\bigg)^{1/q}\bigg\|_{L^p}
\end{split}
\end{equation}
(see, e.g., \cite[Theorem 3]{Triebel_paper}).

Although the following may be well known to many people,
we shall give a proof for the reader's convenience.

\begin{prop}\label{amalgam-Wiener}
The amalgam space $(L^2,\ell^1)(\R^n)$
coincides with the Wiener amalgam space $W_{1,2}(\R^n)$.
\end{prop}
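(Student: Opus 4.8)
The plan is to reduce the statement to the standard identification of amalgam-type norms with discrete smooth decompositions, using the facts \eqref{MW-Fourier} and \eqref{MW-equi} recalled above. First I would use \eqref{MW-Fourier} to write $\|f\|_{W_{1,2}} \approx \|\F^{-1}f\|_{M_{2,1}}$, and then apply the characterization \eqref{MW-equi} with $(p,q)=(2,1)$ to the right-hand side: fixing once and for all a function $\varphi \in \Sh(\R^n)$ with compact support and $\big|\sum_{\nu \in \Z^n}\varphi(\xi-\nu)\big| \ge C$ for all $\xi$ (for instance a smooth partition of unity subordinate to a cover by cubes), this gives
\[
\|f\|_{W_{1,2}} \approx \sum_{\nu \in \Z^n} \|\varphi(D-\nu)\F^{-1}f\|_{L^2} .
\]
Since $\widehat{\F^{-1}f}=f$, we have $\varphi(D-\nu)\F^{-1}f = \F^{-1}[\varphi(\cdot-\nu)f]$, so Plancherel's theorem gives $\|\varphi(D-\nu)\F^{-1}f\|_{L^2} \approx \|\varphi(\cdot-\nu)f\|_{L^2}$, and hence
\[
\|f\|_{W_{1,2}} \approx \sum_{\nu \in \Z^n}\|\varphi(\cdot-\nu)f\|_{L^2} .
\]
It then remains only to prove that the right-hand side is comparable to $\|f\|_{(L^2,\ell^1)}=\sum_{\nu}\big(\int_{\nu+[-1/2,1/2]^n}|f|^2\big)^{1/2}$, i.e.\ that the smooth cutoffs $\varphi(\cdot-\nu)$ may be replaced by sharp cutoffs onto the unit cubes.

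For that comparison, the bound $\sum_\nu\|\varphi(\cdot-\nu)f\|_{L^2} \lesssim \|f\|_{(L^2,\ell^1)}$ follows because $\supp\varphi$ is contained in a fixed ball, so $\varphi(\cdot-\nu)f$ is supported in $\nu$ plus that ball and $\|\varphi(\cdot-\nu)f\|_{L^2} \lesssim \sum_{\mu:\,|\mu-\nu|\le R}\|f\|_{L^2(\mu+[-1/2,1/2]^n)}$ with $R$ independent of $\nu$; summing in $\nu$ and interchanging the (uniformly finite) sums gives the claim. For the reverse bound, on each cube $\nu+[-1/2,1/2]^n$ the lower bound $\big|\sum_\mu\varphi(\xi-\mu)\big|\ge C$ together with the compact support of $\varphi$ yields $|f(\xi)| \lesssim \sum_{\mu:\,|\mu-\nu|\le R}|\varphi(\xi-\mu)f(\xi)|$ there, whence $\|f\|_{L^2(\nu+[-1/2,1/2]^n)} \lesssim \sum_{\mu:\,|\mu-\nu|\le R}\|\varphi(\cdot-\mu)f\|_{L^2}$, and summing in $\nu$ gives $\|f\|_{(L^2,\ell^1)} \lesssim \sum_\nu\|\varphi(\cdot-\nu)f\|_{L^2}$. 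Chaining the three displayed equivalences proves $\|f\|_{(L^2,\ell^1)} \approx \|f\|_{W_{1,2}}$; in particular the two norms are finite on exactly the same tempered distributions, each of which is then represented by a locally square-integrable function, so the two spaces coincide.

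I expect the last comparison step to be the only delicate point: one must keep the number of overlapping translates $\varphi(\cdot-\mu)$ meeting a given unit cube bounded uniformly in $\nu$, so that the rearrangements of the sums are justified, and one must fix at the outset a single $\varphi$ enjoying both the compact support and the uniform lower bound — this is legitimate precisely because $\|\cdot\|_{W_{p,q}}$, and the characterization \eqref{MW-equi}, are independent of the chosen function. The remaining ingredients, namely the Fourier interchange \eqref{MW-Fourier}, the characterization \eqref{MW-equi}, and Plancherel's theorem, are quoted above or entirely standard.
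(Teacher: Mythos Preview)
Your proof is correct and follows essentially the same approach as the paper: both arguments chain \eqref{MW-Fourier}, \eqref{MW-equi}, Plancherel's theorem, and the equivalence $\|f\|_{(L^2,\ell^1)}\approx\sum_{\nu}\|\varphi(\cdot-\nu)f\|_{L^2}$, merely in slightly different order. The only difference is cosmetic---the paper quotes that last equivalence from \cite[Lemma~2.1]{KMT-2} (choosing a nonnegative $\varphi$ with $\varphi\ge 1$ on $[-1/2,1/2]^n$), whereas you supply the short direct argument via finite overlap.
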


\begin{proof}
Let $\varphi$ be a nonnegative function in $\Sh(\R^n)$
such that $\mathrm{supp}\, \varphi$ is compact and
$\varphi \ge 1$ on $[-1/2,1/2]^n$.
Then, we have the norm equivalence
\[
\|f\|_{(L^2,\ell^1)}
\approx
\sum_{\nu \in \Z^n}
\|\varphi(\cdot-\nu)f \|_{L^2}
\]
(see, e.g., \cite[Lemma 2.1]{KMT-2}).
Since $\sum_{\nu \in \Z^n}\varphi(\xi-\nu) \ge 1$
for all $\xi \in \R^n$,
by Plancherel's theorem,
\eqref{MW-equi} and \eqref{MW-Fourier},
\begin{align*}
\sum_{\nu \in \Z^n}
\|\varphi(\cdot-\nu)f \|_{L^2}
&\approx \sum_{\nu \in \Z^n}
\|\F^{-1}[\varphi(\cdot-\nu)f ]\|_{L^2}
\\
&= \sum_{\nu \in \Z^n}
\|\varphi(D-\nu)[\F^{-1}f]\|_{L^2}
\approx \|\F^{-1}f\|_{M_{2,1}}
\approx \|f\|_{W_{1,2}}.
\end{align*}
The proof is complete.
\end{proof}

In the rest of this section,
we shall prove that
the embedding $(L^2,\ell^1) \hookrightarrow B_{p,1}^0$
does not hold, where $1 \le p \le 2$.
Let $\phi, \varphi$ be nonnegative functions in $\Sh(\R^n)$ such that
$\mathrm{supp}\, \phi \subset [-1/4,1/4]^n$,
$\mathrm{supp}\, \varphi \subset [-3/4,3/4]^n$
and $\varphi=1$ on $[-1/2,1/2]^n$.
We take a positive integer $N_0$ satisfying
\begin{equation}\label{prop1.3-A}
2^{k-1/4} \le 2^{k}-\sqrt{n}/4,\quad 2^{k}+\sqrt{n}/4 \le 2^{k+1/4},
\quad k \ge N_0,
\end{equation}
and set
\[
f_N(x)=\sum_{k=N_0}^{N}e^{i2^k e_1\cdot x}\F^{-1}\phi(x),
\quad N \ge N_0,
\]
where $e_1=(1,0,\dots,0) \in \R^n$.

We first consider the Besov space norm of $f_N$.
The support condition of $\phi$ and \eqref{prop1.3-A} give
\[
\mathrm{supp}\, \phi(\cdot-2^k e_1)
\subset \{|\xi-2^{k}e_1| \le \sqrt{n}/4\}
\subset \{2^{k-1/4} \le |\xi| \le 2^{k+1/4}\},
\quad k \ge N_0.
\]
Then,
since $\widehat{f_N}(\xi)=\sum_{k=N_0}^{N}\phi(\xi-2^{k}e_1)$,
we see that
$\psi_{\ell}(D)f_N(x)$ is equal to
$e^{i2^\ell e_1\cdot x}\F^{-1}\phi(x)$ if $N_0 \le \ell \le N$,
and $0$ otherwise,
where $\{\psi_{\ell}\}$ is the same as in the beginning of Section \ref{section5},
and we used (\ref{Besov-parti-2-1}) and (\ref{Besov-parti-2-2}).
Hence, 
\begin{equation}\label{estimate-Besov}
\|f_N\|_{B_{p,1}^0}
=\sum_{\ell=N_0}^{N}
\|e^{i2^\ell e_1\cdot x}\F^{-1}\phi\|_{L^p}
\approx N
\end{equation}
for sufficiently large $N$.

We next consider the amalgam space norm of $f_N$.
By Proposition \ref{amalgam-Wiener},
it is sufficient to estimate $\|f_N\|_{W_{1,2}}$.
Moreover, since $\mathrm{supp}\, \varphi$ is compact
and $\sum_{\nu \in \Z^n}\varphi(\xi-\nu) \ge 1$,
we have by \eqref{MW-equi}
\[
\|f_N\|_{(L^2,\ell^1)}
\approx \bigg\|\bigg(\sum_{\nu \in \Z^n}
|\varphi(D-\nu)f_N|^2\bigg)^{1/2}\bigg\|_{L^1}.
\]
Noting that
$\mathrm{supp}\, \phi(\cdot-2^k e_1) \subset 2^k e_1+[-1/4,1/4]^n$,
$\mathrm{supp}\, \varphi(\cdot-\nu) \subset \nu+[-3/4,3/4]^n$
and $\varphi(\cdot-\nu)=1$ on $\nu+[-1/4,1/4]^n$,
we see that $\varphi(D-\nu)f_N(x)$ is equal to
$e^{i2^k e_1\cdot x}\F^{-1}\phi(x)$ if $\nu=2^{k}e_1$ and $N_0 \le k \le N$,
and $0$ otherwise.
Therefore,
\begin{equation}\label{estimate-amalgam}
\|f_N\|_{(L^2,\ell^1)}
\approx \bigg\|\bigg(\sum_{k=N_0}^{N}
|e^{i2^k e_1\cdot x}\F^{-1}\phi|^2\bigg)^{1/2}\bigg\|_{L^1}
\approx N^{1/2}
\end{equation}
for sufficiently large $N$.

It follows from \eqref{estimate-Besov} and \eqref{estimate-amalgam} that
if $(L^2,\ell^1) \hookrightarrow B_{p,1}^0$,
then
\[
N \approx \|f_N\|_{B_{p,1}^0} \lesssim \|f_N\|_{(L^2,\ell^1)} \approx N^{1/2}
\]
for all sufficiently large $N$. However, this is a contradiction.
The proof of Proposition \ref{main3} is complete.

\section*{Acknowledgement}
The authors thank the referee for his/her careful reading of the manuscript. 
The research of the third author was partially supported
by JSPS KAKENHI Grant Number JP20K03700.


\end{document}